\newtheorem{theorem}{Theorem}[section]
\newtheorem*{theorem*}{Theorem}
\newtheorem{lemma}[theorem]{Lemma}
\newtheorem*{proposition*}{Proposition}
\newtheorem{corollary}[theorem]{Corollary}
\newtheorem*{corollary*}{Corollary}
\newtheorem*{fact*}{Fact}
\newtheorem{conjecture}[theorem]{Conjecture}
\theoremstyle{definition}
\newtheorem*{definition*}{Definition}
\newtheorem{definition}[theorem]{Definition}
\newtheorem*{question*}{Question}
\newtheorem*{notation*}{Notation}
\theoremstyle{remark}
\newcommand{\M}{\mathcal{M}}
\newcommand{\Th}{\mathrm{Th}}
\renewcommand{\L}{\mathcal{L}}
\newcommand{\ar}{\mathrm{ar}}
\newcommand{\restr}{\upharpoonright}
\newcommand{\C}{\mathcal{C}}
\newcommand{\Ind}[1]
{#1\setbox0=\hbox{$#1x$}\kern\wd0\hbox to 0pt{\hss$#1\mid$\hss} \lower.9\ht0\hbox to 0pt{\hss$#1\smile$\hss}\kern\wd0}
\newcommand{\notind}[1]
{#1\setbox0=\hbox{$#1x$}\kern\wd0
\hbox to 0pt{\mathchardef\nn=12854\hss$#1\nn$\kern1.4\wd0\hss}
\hbox to 0pt{\hss$#1\mid$\hss}\lower.9\ht0 \hbox to 0pt{\hss$#1\smile$\hss}\kern\wd0}
\newcommand{\N}{\mathbb{N}}
\newcommand{\Gaif}{\mathsf{Gaif}}
\newcommand{\Bfrak}{\ensuremath{\mathfrak{B}}}
\newcommand{\Cfrak}{\ensuremath{\mathfrak{C}}}
\newcommand{\Dcal}{\ensuremath{\mathcal{D}}}
\newcommand{\Ical}{\ensuremath{\mathcal{I}}} 
\newcommand{\Kcal}{\ensuremath{\mathcal{K}}} 
\newcommand{\Mcal}{\ensuremath{\mathcal{M}}}
\newcommand{\Pcal}{\ensuremath{\mathcal{P}}}
\newcommand{\Rcal}{\ensuremath{\mathcal{R}}}
\newcommand{\Scal}{\ensuremath{\mathcal{S}}}
\newcommand{\Psf}{\ensuremath{\mathsf{P}}}
\newcommand{\FPT}{\ensuremath{\mathsf{FPT}}}
\newcommand{\FO}{\ensuremath{\mathsf{FO}}}
\newcommand{\PP}{\mathbf{P}}
\title{Monadic NIP in monotone classes of relational structures}
\author[S. Braunfeld]{Samuel {Braunfeld}\ \orcidlink{0000-0003-3531-9970}}
\address{Samuel {Braunfeld}, Computer Science Institute of Charles University (IUUK), Praha, Czech Republic}
\email{\href{sbraunfeld@iuuk.mff.cuni.cz}{sbraunfeld@iuuk.mff.cuni.cz}}
\author[A. Dawar]{Anuj {Dawar}\ \orcidlink{0000-0003-4014-8248}}
\address{Anuj {Dawar}, Department of Computer Science and Technology, University of Cambridge, UK}
\email{\href{anuj.dawar@cl.cam.ac.uk}{anuj.dawar@cl.cam.ac.uk}}
\author[I. Eleftheriadis]{Ioannis {Eleftheriadis}\ \orcidlink{0000-0003-4764-8894}}
\address{Ioannis {Eleftheriadis}, Department of Computer Science and Technology, University of Cambridge, UK}
\email{\href{ie257@cam.ac.uk}{ie257@cam.ac.uk}}
\author[A. Papadopoulos]{Aris {Papadopoulos}\ \orcidlink{0000-0001-7071-4277}}
\address{Aris {Papadopoulos}, School of Mathematics, Univesity of Leeds, UK}
\email{\href{mailto:mmadp@leeds.ac.uk}{mmadp@leeds.ac.uk}}
\thanks{The first author is supported by Project 21-10775S of the Czech Science Foundation (GA\v{C}R), European Union’s Horizon 2020 research and innovation programme (grant agreement No 810115 – Dynasnet). The third author is supported by a George and Marie Vergottis Scholarship awarded through Cambridge Trust, Onassis Foundation Scholarship, Robert Sansom Studentship. The fourth author is supported by a Leeds Doctoral Scholarship, from the University of Leeds.}
\subjclass[2020]{Primary: 03C13, 03C45 Secondary: 05C75, 68R05, 68Q27}
\keywords{Model theory, finite model theory, structural graph theory, model-checking}
\begin{document}

\maketitle

\begin{abstract}
We prove that for any monotone class of finite relational structures, the first-order theory of the class is NIP in the sense of stability theory if, and only if, the collection of Gaifman graphs of structures in this class is nowhere dense.  This generalises to relational structures a result previously known for graphs and answers an open question posed by Adler and Adler (2014).  The result is established by the application of Ramsey-theoretic techniques and shows that the property of being NIP is highly robust for monotone classes.  We also show that the model-checking problem for first-order logic is intractable on any class of monotone structures that is not (monadically) NIP. This is a contribution towards the conjecture of Bonnet et al.\ that the hereditary classes of structures admitting fixed-parameter tractable model-checking are precisely those that are monadically NIP.
\end{abstract}

\section{Introduction}

The development of stability theory in classical model theory, originating with Shelah's classification programme fifty years ago~\cite{She78,Bal17}, has sought to distinguish \emph{tame} first-order theories from \emph{wild} ones.  A key discovery is that combinatorial configurations serve as dividing lines in this classification.

Separately, in the development of finite model theory, there has been in interest in investigating \emph{tame} classes of finite structures.  Here tameness can refer to \emph{algorithmic tameness} meaning that algorithmic problems that are intractable in general may be tractable on a tame class; or it can refer to \emph{model-theoretic tameness} meaning that the class enjoys some desirable model-theoretic properties that are absent in the class of all finite structures.  See~\cite{Daw07} for an exposition of these notions of tameness.  The tame classes that arise in this context are based on notionns taken from the study of sparse graphs~\cite{NOdM12} and often extended to classes of relational structures beyond graphs by applying them to the \emph{Gaifman graphs} of such structures.

In the context of algorithmic tameness of sparse classes, this line of work culminated in the major result of Grohe et al.~\cite{nwdtract} showing that the problem of model-checking first-order sentences is fixed-parameter tractable ($\FPT$) on any class of graphs that is \emph{nowhere dense}.  This generalized a sequence of earlier results showing the tractability of the model-checking problem on classes of graphs satisfying other notions of sparsity.  Moreover, it is also known~\cite{KD09} that this is the limit of tractability for \emph{monotone} classes of graphs.  That is to say that (under reasonable assumptions) any monotone class of graphs in which first-order model checking is $\FPT$ is necessarily nowhere dense.  These results underline the centrality of the notion of nowhere density in the study of sparse graph classes.

A significant line of recent research has sought to generalize the methods and results on tame sparse classes of graphs to more general classes that are not necessarily sparse.  Interestingly, this has tied together notions of tameness arising in finite model theory and those in classical model theory.  Notions arising from stability theory play an increasingly important role in these considerations (see~\cite{PST18,GPT22}, for example).  Central to this connection is the realisation that for well-studied notions of sparseness in graphs, the first-order theory of a sparse class $\mathcal C$ is stable.  Thus, stability-theoretic notions of tameness, applied to the theory of a class of finite structures, generalize the notions of tameness emerging from the theory of sparsity.

A key result connecting the two directions is that a monotone class of finite graphs is stable if, and only if, it is nowhere dense.  This connection between stability and combinatorial sparsity was established in the context of infinite graphs by Podewski and Ziegler~\cite{podewskiziegler} and extended to classes of finite graphs by Adler and Adler~\cite{AdlerAdler2014}.  Indeed, for monotone classes of graphs, stability is a rather robust concept as the theory of such a class is stable if, and only if, it is NIP (that is, it does not have the independence property) and these conditions are in turn equivalent to it being monadically stable and monadically NIP (these notions are formally defined in Section~\ref{sec:prelim} below).

A question posed by Adler and Adler is whether their result can be extended from graphs to structures in any finite relational language.  We settle this question in the present paper by establishing Theorem~\ref{th:maintheorem} below.  In the following $\Gaif(\C)$ denotes the collection of Gaifman graphs of structures in the class $\C$.  Note that the extension from graphs to relational structures requires considerable combinatorial machinery in the form of Ramsey-theoretic results, which we detail in later sections.  We also relate the characterization to the tractability of the classes.  In summary, our key results are stated in the following theorem. 
\begin{theorem}\label{th:maintheorem}
Let $\C$ be a monotone class of structures in a finite relational language. Then the following are equivalent:
\begin{enumerate}
    \item $\C$ is NIP;
    \item $\C$ is monadically NIP; 
    \item $\C$ is stable; 
    \item $\C$ is monadically stable;
    \item $\Gaif(\C)$ is nowhere dense; and 
    \item (assuming $\mathsf{AW}[*]\neq \FPT$) $\C$ admits fixed-parameter tractable model-checking.
\end{enumerate}
\end{theorem}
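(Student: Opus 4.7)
The implications $(4) \Rightarrow (2) \Rightarrow (1)$ and $(4) \Rightarrow (3) \Rightarrow (1)$ are immediate from the definitions, so the essential content is to establish $(1) \Rightarrow (5)$, $(5) \Rightarrow (4)$, and the equivalence $(5) \Leftrightarrow (6)$. For $(5) \Leftrightarrow (6)$ the plan is to reduce to the graph case: $(5) \Rightarrow (6)$ will follow from the $\FPT$ model-checking theorem of Grohe, Kreutzer and Siebertz~\cite{nwdtract} applied to Gaifman graphs equipped with bounded unary labels encoding the relational information, while $\lnot(5) \Rightarrow \lnot(6)$ will come from the $\mathsf{AW}[*]$-hardness result of Dawar and Kreutzer~\cite{KD09} for somewhere dense monotone graph classes.

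For $(1) \Rightarrow (5)$ I would argue contrapositively, aiming to show that somewhere density of $\Gaif(\C)$ produces a formula witnessing the independence property. Fix $r$ such that the $r$-subdivision $S_r(K_N)$ embeds into $\Gaif(M_N)$ for some $M_N \in \C$ for arbitrarily large $N$. Each subdivision edge is witnessed by a tuple in some relation of bounded arity; by monotonicity we may prune $M_N$ so that it contains only these witnessing tuples. The next step is a Ramsey-theoretic regularization, in the spirit of the Ne\v{s}et\v{r}il--R\"odl partite lemma (or an iterated pigeonhole on atomic types of witnessing tuples along subdivision paths), which extracts for $N$ sufficiently large a uniform sub-subdivision $S_r(K_{N'})$ with $N' \geq n + 2^n$ in which every branch-to-branch length-$(r+1)$ path realises the same atomic type. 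Finally, designate $n$ branch vertices as $a_1, \dots, a_n$ and $2^n$ of them as $b_S$ for $S \subseteq [n]$, and invoke monotonicity once more to delete the tuples witnessing the path from $a_i$ to $b_S$ whenever $i \notin S$. Then the formula $\phi(x,y)$ asserting the existence of a uniform-type length-$(r+1)$ path from $x$ to $y$ realises an $\mathrm{IP}$ pattern of size $n$, contradicting $\NIP$.

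For $(5) \Rightarrow (4)$, suppose $\Gaif(\C)$ is nowhere dense. The Podewski--Ziegler theorem~\cite{podewskiziegler}, together with its finitary refinement of Adler and Adler~\cite{AdlerAdler2014}, asserts that any monotone nowhere dense class of \emph{graphs} is monadically stable. To lift this to $\C$, the plan is to pass to the incidence graph encoding: represent each tuple of each relation by an auxiliary vertex joined to the tuple's elements, with bounded unary predicates recording relation and position. The monotone closure of the resulting class of labelled graphs remains nowhere dense (each incidence graph being essentially a bounded-multiplicity subdivision of the Gaifman graph with bounded additional degree), hence monadically stable. Since $\C$ is first-order interpretable in an appropriate monadic expansion of this class, and monadic stability is preserved under both interpretation and monadic expansion, $\C$ is monadically stable.

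The hard part will be the Ramsey step in $(1) \Rightarrow (5)$. Unlike in graphs, distinct subdivision edges may be witnessed by tuples of different relations in different argument positions, and successive edges along the same subdivision path may share auxiliary witnesses. Securing \emph{path-level} rather than merely edge-level atomic-type uniformity will require an iterated Ramsey argument that tracks the atomic type of each subdivision path in its entirety, including the relations and positions of all witnessing tuples along it.
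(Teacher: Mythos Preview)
Your $(1)\Rightarrow(5)$ is essentially the paper's Section~4 argument: Ramsey on path types along subdivision paths, then delete tuples using monotonicity. You correctly identify the key difficulty; the paper handles it via iterated applications of ordinary, bipartite, and canonical bipartite Ramsey theorems to uniformise both the path type and the overlap pattern of existential witnesses.

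Your $(5)\Rightarrow(4)$ takes a genuinely different route from the paper. The paper (Section~6) goes through the Braunfeld--Laskowski characterisation of non-monadically-NIP by quasi-positive pre-coding configurations, showing that a disjoint such configuration forces subdivided bicliques in the Gaifman graph. Your incidence-graph reduction is more elementary: it reduces cleanly to Adler--Adler for graphs and works even for non-monotone $\C$, recovering the paper's Corollary after Theorem~6.3. The paper's argument, by contrast, yields a self-contained generalisation of Podewski--Ziegler without passing through the graph case. One caution: the claim that the incidence-graph class stays nowhere dense needs an actual argument (tuple-vertices have bounded degree, so for $n$ large the branch vertices of any $K_n^r$ in the incidence graph must be original; bipartiteness forces $r$ odd, and projecting the alternating subdivision paths to original vertices yields a subdivided clique in the Gaifman graph).

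There is, however, a genuine gap in your $\lnot(5)\Rightarrow\lnot(6)$. Citing Dawar--Kreutzer is not enough: their hardness result is for \emph{monotone} graph classes, and $\Gaif(\C)$ need not be monotone even when $\C$ is, since deleting a single tuple from a structure may remove several Gaifman edges at once, so an arbitrary subgraph of $\Gaif(M)$ is not in general $\Gaif(M')$ for any weak substructure $M'\in\C$. The paper's Section~5 does real additional work here: it refines the path formula from Section~4 into a \emph{simple} and non-symmetric one, and exhibits an explicit polynomial-time map $\Phi:\Bfrak\to\C$ together with a domain formula so that bipartite graphs are polynomially interpreted in $\C$. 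Your $(1)\Rightarrow(5)$ machinery, if made effective, can be upgraded to this, but you must then produce a domain formula recovering the encoded vertices inside $\Phi(G)$; this is where the paper's auxiliary leaves (the map $f$ in Theorem~5.2 attaching $k{+}1$ pendants) and the non-symmetry hypothesis on $\phi$ are used.

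A minor point on $(5)\Rightarrow(6)$: Gaifman graphs with unary labels do not let you recover the relational structure (the Gaifman graph forgets which tuples are present). Either use your incidence encoding here as well, or simply note, as the paper does, that the Grohe--Kreutzer--Siebertz algorithm already applies directly to relational structures whose Gaifman class is nowhere dense.
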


Thus, for monotone classes of relational structures, the picture is clear.  Beyond monotone classes, not every NIP class is stable or monadically NIP.  However, it has been conjectured~\cite{tww4} that for any hereditary class $\C$ of structures, the model-checking problem on $\C$ is fixed-parameter tractable if, and only if, $\C$ is NIP.  This has previously been established for monotone classes of graphs (by the results of Adler and Adler, combined with those of Grohe et al.) and for hereditary classes of ordered graphs by results of Simon and Torunczyk~\cite{tww}.  Our results also extend the classes for which this conjecture is verified to all monotone classes of relational structures.

We establish some necessary definitions and notation in Sections~\ref{sec:prelim} and \ref{sec:path}.  The proof of Theorem~\ref{th:maintheorem} occupies the next three sections.  The equivalence of the first four  notions for any monotone class $\C$ is due to Braunfield and Laskowski~\cite{exmNIP}.  We therefore establish the equivalence of the first with the fifth and sixth.  In Section~\ref{sec:ip} we show that if $\Gaif(\C)$ is not nowhere dense, then $\C$ admits a formula with the independence property.   The converse is established in Section~\ref{sec:dense}.  That nowhere density of $\Gaif(\C)$ implies tractability is implicit in~\cite{nwdtract}.  We establish the converse of this statement in Section~\ref{sec:tractable}.

\section{Preliminaries}\label{sec:prelim}

    We assume familiarity with first-order logic and the basic concepts of model theory. We have tried to make this paper as self-contained as possible, but refer the reader to \cite{hodges} for background and undefined notation. Throughout this paper, $\L$ denotes a finite, first-order, relational language. We write $\mathrm{ar}(R)$ for the arity of each relation symbol $R \in \L$. Tuples of elements or variables are denoted by overlined letters and given a tuple $\bar a$ and $k\leq|\bar a|$, we write $\bar a(k)$ to denote the $k$-th element of $\bar a$. Often we abuse notation and treat tuples as unordered sets; whether we refer to the ordered tuple or the unordered set should be clear from the context. 

    We adopt the convention of allowing finitely many constant symbols (i.e.\ parameters) in $\L$-formulas. Syntactically, these are to be understood as additional free variables, while semantically these have a fixed interpretation in every $\L$-structure. This is purely a notational convenience and has no effect on the applicability of our results. By a further abuse of notation, we do not distinguish between a parameter $p$ and its interpretation $p^M$ in an $\L$-structure, $M$.
    
\subsection{Graphs and relational structures}

An $\L$-structure is denoted by $(M,R^M)_{R \in \L}$, where $M$ is its underlying set and $R^M \subseteq M^{\ar(R)}$ is the interpretation of the relation symbol $R \in \L$ in $M$. We write $\Cfrak(\L)$ for the class of all $\L$-structures. By abusing notation, often we do not distinguish between an $\L$-structure and its underlying set. For an $\L$-structure $M$ and a subset $A\subseteq M$ we denote by $M[A]$ the substructure of $M$ induced by $A$. By the \emph{equality type} of a tuple $\bar m$ from an $\L$-structure $M$, we mean the set $\Delta_=(\bar m)$ of atomic formulas $\eta(\bar x)$ using only the equality symbol such that $M \models \eta(\bar m)$.

A \emph{homomorphism} from an $\L$-structure $M$ to an $\L$-structure $N$ is a map $f:M \to N$ satisfying such that for all relation symbols $R \in \L$ and tuples $\bar m \in M^{\ar(R)}$, if $\bar m \in R^M$ then $f(\bar m) \in R^N$. A homomorphism of pointed structures $f:(M,\bar m) \to (N,\bar n)$ is understood as a homomorphism $f:M \to N$ of the underlying $\L$-structures such that $f(\bar m)=\bar n$. 

By a graph $G$ we mean an $\{E\}$-structure such that $E^G\subseteq G^2$ is a symmetric, irreflexive binary relation. We write $E(G)$ rather than $E^G$ for the edge set of a graph. Given a graph $G$ and $r \in \N$, we write $G^r$ for the \emph{$r$-subdivision of $G$}, i.e.\ the graph obtained by replacing each edge of $G$ by a path of length $r + 1$. We denote by $K_n$ the complete graph on $n$ vertices and by $K_{t,t}$ the complete bipartite graph with parts of size $t$. We write $G=(U,V;E)$ for a bipartite graph with parts $U$ and $V$ and edge set $E \subseteq U\times V$, and write $\Bfrak$ for the class of all bipartite graphs.

For a graph $G$, a vertex $u \in G$, and $m \in \N$ we write $N^G_m(u)$ for the \emph{$m$-neighbourhood of $G$ around $u$}, i.e.\ the vertices that are reachable from $u$ by a path of length at most $m$. For $A\subseteq G$, we write $N^G_m(A)$ for $\bigcup_{u \in A} N^G_m(u)$. Moreover, we write $N^G(u)$ for $\bigcup_{m \in \N} N^G_m(u)$, and similarly define $N^G(A)$.

 \begin{definition}[Gaifman graph/class]
Given an $\L$-structure $(M,R^M)_{M \in \L}$ we define the \emph{Gaifman graph} (or \emph{underlying graph}) of $M$, denoted $\Gaif(M)$, to be the graph on vertex set $M$ with edges:
\begin{center}
    $E := \{ (x,y):\exists R \in \L \exists v_1,\dots,v_{\mathrm{ar}(R)-2} \exists  \sigma \in S_{\mathrm{ar}(R)}(\sigma(x,y,v_1,\dots,v_{\mathrm{ar}(i)-2})\in R^M)\}, $
\end{center}
where $\Scal_n$ the symmetric group on $n$ elements. For a class of relational structures $\mathcal{C}$, all in the same language, we define the \emph{Gaifman class} of $\C$ to be $\Gaif(\C):=\{ \Gaif(M) : M \in \C \}$.
\end{definition}

\subsection{Sparsity and stability}\label{subsec:sparsity-stability}

Throughout this paper, $\C$ refers to a class of $\L$-structures or graphs. We write $\Th(\C)$ for the common theory of the class, i.e.\ the set of all first-order $\L$-sentences that hold in all structures in $\C$. We say that a class $\C$ is:
\begin{itemize}
       \item \emph{hereditary}, if $\C$ is closed under induced substructures;
       \item \emph{monotone}, if $\C$ is closed under weak substructures, i.e.\ if $(M, R^M)_{R \in \L} \in \C$ then $(M', R^{M'})_{R \in \L} \in \C$ for any $M' \subseteq M$ and $R^{M'}\subseteq R^M$.
 \end{itemize}

 \begin{definition}
    Let $\C$ be a class of graphs. We say that $\C$ is \emph{nowhere dense} if for every $r\in\N$ there is some $n\in\N$ such that for all $G\in\C$ we have that $K_n^r$ is not a subgraph of $G$. 
\end{definition}

 Nowhere density was introduced by Nešetřil and Ossona de Mendez \cite{nowhere}, as a structural property of classes of finite graphs that generalises numerous well-behaved classes, including graphs of bounded degree, planar graphs, graphs excluding a fixed minor and graphs of bounded expansion. Nowhere dense classes play an important role in algorithmic graph theory, as several computationally hard problems become tractable when restricted to such classes. 

Let us now recall some core notions of tameness from classification theory, adapted from the context of infinite structures to that of classes of (not necessarily infinite) structures.

\begin{definition}[Order/Independence Property]
    Let $\mathcal{C}$ be a class of $\L$-structures. We say that an $\L$-formula $\phi(\bar x,\bar y)$ has:
\begin{enumerate}
    \item \emph{The Order Property in $\mathcal{C}$} if for all $n\in \N$ there is some $M_n\in\mathcal{C}$ and sequences $(\bar a_i)_{i \in [n]}$  and $(\bar b_j)_{j \in [n]}$ of tuples from $M_n$ such that:
    \begin{center}
        $M_n\vDash \phi(\bar a_i,\bar b_j) \text{ if, and only if, } i<j.$
    \end{center}
    \item \emph{The Independence Property in $\mathcal{C}$} if for all $G=(U,V;E)\in\Bfrak$ there is some $M_G\in\mathcal{C}$ and sequences of tuples $(\bar a_i)_{i \in U}$ and $(\bar b_j)_{j\in V}$ such that:
    \begin{center}
        $M_n\vDash \phi(\bar a_i,\bar b_j) \text{ if, and only if, } (i,j)\in E.$
    \end{center}
\end{enumerate}
We say that $\mathcal{C}$ is \emph{stable} if no formula has the order property in $\mathcal{C}$. We say that $\mathcal{C}$ is \emph{NIP} (No Independence Property) if no formula has the independence property in $\mathcal{C}$. 
\end{definition}

An easy application of compactness reveals that a class $\C$ is stable (resp.\ NIP) if, and only if, all completions of $\Th(\C)$ are stable (resp.\ NIP) in the standard model-theoretic sense (see for instance \cite{She78} for the standard model-theoretic definitions). 

Given a class $\C$ of $\L$-structures and an expansion $\L'=\L\cup\{P_i:i\in I\}$ by unary predicates, we say that a class $\C'$ of $\L'$-structures is a \emph{monadic expansion} of $\C$ if $\C = \{ M'\restr_\L : M' \in \C' \}$, where for an $\L'$-structure $M'$ we write $M'\restr_\L$ for the $\L$-structure obtained from $M'$ by simply forgetting each relation symbol not in $\L$. In other words, $\C'$ is a monadic expansion of $\C$ if, for each structure $M\in\C$, $\C'$ contains at least one copy of $M$ expanded with unary predicates which are interpreted freely, and no other structures.

\begin{definition}[Monadic Stability/NIP]
    Let $\C$ be a class of $\L$-structures. We say that $\C$ is \emph{monadically stable} (resp.\ \emph{monadically NIP}) if all monadic expansions $\C'$ of $\C$ are stable (resp.\ NIP).
\end{definition}

The relationship between sparsity and stability is captured by the following theorem, which was established by Podewski and Ziegler \cite{podewskiziegler}, in the context of infinite graphs, and much later translated to the context of graph classes by Adler and Adler \cite{AdlerAdler2014}. 

\begin{theorem}[Adler, Adler \cite{AdlerAdler2014}; Podewski, Ziegler \cite{podewskiziegler}]\label{th:adleradler}
        Let $\C$ be a nowhere dense class of graphs. Then $\C$ is monadically stable. Moreover, the following are equivalent when $\C$ is monotone:
        \begin{enumerate}
            \item $\C$ is NIP;
            \item $\C$ is monadically NIP;
            \item $\C$ is stable;
            \item $\C$ is monadically stable;
            \item $\C$ is nowhere dense.
        \end{enumerate}
\end{theorem}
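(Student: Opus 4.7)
The plan is to organise the proof around two substantive implications. Several follow immediately from general principles: monadic stability implies both monadic NIP and stability, stability implies NIP, and monadic variants trivially imply their non-monadic counterparts. This gives $(4)\Rightarrow(2),(3)$ and $(2),(3)\Rightarrow(1)$ for free. The real work lies in the initial unconditional claim that every nowhere dense class of graphs is monadically stable -- which yields $(5)\Rightarrow(4)$ in particular -- and in $(1)\Rightarrow(5)$, the only implication of the equivalence that requires monotonicity.

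For $(1)\Rightarrow(5)$ I would argue by contraposition. Suppose $\C$ is monotone and not nowhere dense, so there is a fixed $r\in\N$ such that for every $n\in\N$ some graph in $\C$ contains $K_n^r$ as a subgraph; by monotonicity, $K_n^r\in\C$ itself. For any bipartite graph $H=(U,V;E_H)$ with $|U|+|V|\leq n$, the subdivision $H^r$ embeds as a subgraph of $K_n^r$ (use the vertices of $H$ as principal vertices of $K_n^r$ and retain only the $r$-subdivided edges of $H$), so $H^r\in\C$. Let $\phi(x,y)$ be the first-order formula expressing ``there is a path of length exactly $r+1$ between $x$ and $y$'', which is expressible since $r$ is a fixed constant. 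Evaluated on the principal vertices $(a_i)_{i\in U},(b_j)_{j\in V}$ of $H^r$, one has $\phi(a_i,b_j)$ iff $(i,j)\in E_H$: the direct subdivision path has length $r+1$, while any alternative route must detour through a third principal vertex and has length at least $3(r+1)$. Varying $H$ over all bipartite graphs exhibits the independence property of $\phi$ in $\C$, contradicting $(1)$.

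For the unconditional claim I would proceed by contradiction and compactness. Assume some monadic expansion $\C'$ of $\C$ is unstable, with $\phi(\bar x,\bar y)$ having the order property in $\C'$. An ultraproduct construction over $\C'$ produces a structure $M^*$ in which $\phi$ defines an infinite half-graph. The Gaifman graph of $M^*$ is \emph{superflat} in the sense of Podewski--Ziegler: since $\Gaif(\C)$ excludes $K_n^r$ as a subgraph for some $n=n(r)$ at each $r$, compactness lifts this exclusion to $\Gaif(M^*)$ and rules out $K_\omega^r$ as a subgraph for every $r$. Podewski and Ziegler's theorem \cite{podewskiziegler} then gives stability of the pure graph reduct of $M^*$. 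The last step is to transfer this stability through the monadic expansion, showing that the unary predicates introduced in the expansion cannot themselves create the order property. I would invoke the structural theory of nowhere dense classes -- uniform quasi-wideness and low tree-depth colourings -- to argue that any order definable in the expansion is already reflected, up to bounded colour patterns, in the graph reduct.

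The main obstacle is precisely this last transfer step: unary predicates can in principle distinguish vertices in ways that encode an order invisible in the underlying graph, and ruling this out demands the quantitative sparsity afforded by nowhere density. This is the core technical contribution of Adler and Adler over Podewski and Ziegler, and is where I would expect to spend the bulk of the work; the other implications, including the monotone half of the equivalence, follow from comparatively direct arguments.
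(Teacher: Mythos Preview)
The paper does not prove this theorem; it is stated as a background result and attributed to Adler--Adler \cite{AdlerAdler2014} and Podewski--Ziegler \cite{podewskiziegler}, with no argument given in the text. There is therefore no proof in the paper to compare your proposal against.

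That said, your sketch is largely on target for how the cited results actually work. The contrapositive argument for $(1)\Rightarrow(5)$ is essentially the Adler--Adler argument and is correct (the minor slip that the detour length is $2(r+1)$ rather than $3(r+1)$ is harmless). For $(5)\Rightarrow(4)$, however, your framing of a ``transfer step'' from stability of the graph reduct to stability of the monadic expansion is not how the argument runs and is where your sketch is weakest. The key observation you are missing is simpler than what you propose: unary predicates do not add edges to the Gaifman graph, so a monadic expansion of a nowhere dense class still has a nowhere dense (hence, in the limit, superflat) Gaifman graph. One then applies the Podewski--Ziegler analysis directly to the expanded structure, rather than proving stability of the reduct and attempting to lift it afterwards. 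Your proposed route through quasi-wideness and low tree-depth colourings would work but is more machinery than needed here; the point is that superflatness is a property of the underlying graph and is blind to unary colouring, so the Podewski--Ziegler argument (which ultimately rests on Gaifman locality and the sparsity of neighbourhoods) goes through unchanged in the presence of extra unary predicates.
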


Furthermore, Adler and Adler asked if Theorem \ref{th:adleradler} can be generalised to arbitrary relational structures with finite signature. Recently, Braunfeld and Laskowski established a collapsing phenomeon akin to Theorem \ref{th:adleradler} for relational structures. 
    
\begin{theorem}[Braunfeld, Laskowski, \cite{exmNIP}]\label{th:exmNIP}
Let $\C$ be a hereditary class of structures. Then $\C$ is monadically NIP (resp.\ monadically stable) if, and only if, $\C$ is NIP (resp.\ stable). Moreover, if $\C$ is monotone then $\C$ is NIP if, and only if, it is stable. 
\end{theorem}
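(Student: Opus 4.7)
My plan is to handle the two assertions separately, with the collapse of NIP and monadic NIP for hereditary classes as the main content and the \emph{moreover} clause following afterwards. Only the direction $\text{NIP} \Rightarrow \text{monadically NIP}$ requires argument, as the converse is immediate from the trivial monadic expansion. Assume then that $\C$ is hereditary and NIP, and suppose towards contradiction that some monadic expansion $\C^+$ of $\C$ over $\L^+=\L\cup\{P_1,\ldots,P_k\}$ admits an $\L^+$-formula $\phi^+(\bar x,\bar y)$ witnessing IP in $\C^+$: for each bipartite graph $G=(U,V;E)$ we have $M_G^+\in\C^+$ and tuples $(\bar a_u)_{u\in U}, (\bar b_v)_{v\in V}$ with $M_G^+\vDash\phi^+(\bar a_u,\bar b_v)$ if and only if $(u,v)\in E$.

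The main step is a Ramsey-theoretic extraction designed to eliminate the dependence of $\phi^+$ on the new unary predicates. Each witness tuple carries one of only finitely many predicate-types over $\{P_1,\ldots,P_k\}$. I would pass (by compactness) to a sufficiently saturated structure $\Mfrak^+$ containing all witnesses $(M_G^+)_G$ simultaneously, and then apply a structural Ramsey theorem --- Ne\v{s}et\v{r}il--R\"odl, or iterated pigeonhole on an ordered expansion of the witness tuples --- to extract an indiscernible sub-configuration on which the predicate-types of the tuples $(\bar a_u)$ and $(\bar b_v)$ are constant across both sides. On this sub-configuration, $\phi^+$ becomes equivalent to an $\L$-formula $\phi$ after absorbing the constant predicate values into a bounded parameter set. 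Since the class of finite ordered bipartite graphs is a Ramsey class, the extracted sub-configuration still realizes every bipartite pattern of bounded size, and heredity of $\C$ ensures these sub-configurations live inside structures of $\C$. We therefore obtain an $\L$-formula with IP in $\C$, contradicting the hypothesis.

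For the \emph{moreover} clause, observe that a monotone class is a fortiori hereditary, so the first part yields $\text{NIP}\iff\text{monadically NIP}$ and $\text{stable}\iff\text{monadically stable}$. It thus suffices to show that in a monotone class, monadic stability and monadic NIP coincide --- equivalently, that any formula with the order property in such a class can be promoted, using monotonicity, to a formula with IP. Given a long order-property witness $(\bar a_i,\bar b_j)$ in some $M\in\C$ for a formula $\phi$, I would first pass to an indiscernible sequence so that the truth of $\phi(\bar a_i,\bar b_j)$ depends transparently on the atomic relations between the tuples, and then, for each bipartite graph $G$, delete precisely the atomic tuples responsible for the truth of $\phi(\bar a_i,\bar b_j)$ at pairs $(i,j)\notin E(G)$. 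The resulting structures remain in $\C$ by monotonicity and witness IP for $\phi$.

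The hardest step is the uniformity required in the Ramsey extraction of the first part: the extracted sub-configuration must retain the full encoding power of arbitrary bipartite graphs rather than merely a linear order or some weaker combinatorial shadow. This is managed by working inside a single saturated model containing all witness structures simultaneously and applying structural Ramsey on an ordered expansion, which ensures that any sufficiently large sub-configuration realizes every bipartite pattern of bounded size. A secondary subtlety in the monotone clause is verifying that the atomic tuples responsible for $\phi$ can indeed be deleted one-edge-at-a-time without collateral damage, which requires the preliminary indiscernibility reduction to localize the dependence.
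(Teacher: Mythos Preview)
The paper does not prove this theorem; it is quoted from \cite{exmNIP}. Your proposed argument, however, has a genuine gap in the main direction $\text{NIP}\Rightarrow\text{monadically NIP}$.

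Making the predicate-types of the witness tuples $\bar a_u,\bar b_v$ constant via Ramsey does not eliminate the unary predicates from $\phi^+$. The formula may quantify over elements whose $P_i$-colour is what determines its truth: e.g.\ if $\phi^+(\bar x,\bar y)=\exists z\,(P_1(z)\wedge R(\bar x,z,\bar y))$, then the colouring of the existential witness $z$ --- not of $\bar a_u$ or $\bar b_v$ --- carries the information, and no homogenisation of the outer tuples yields an equivalent $\L$-formula. More broadly, a Ramsey extraction produces a \emph{homogeneous} sub-configuration, which realises a single fixed pattern, not every bipartite pattern; your claim that ``the extracted sub-configuration still realizes every bipartite pattern of bounded size'' conflates homogeneity with universality. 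This is precisely why the collapse is a substantial theorem rather than a bookkeeping exercise on colours.

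The actual Braunfeld--Laskowski argument is structurally different: one shows that failure of monadic NIP produces, already in the base language $\L$, a \emph{pre-coding configuration} --- a formula $\phi(\bar x,\bar y,z)$ together with tuples performing definable pairing --- and that such a configuration forces IP in $\L$. Heredity enters by ensuring the coding formula can be taken existential; monotonicity pushes it down to quasi-positive (this is exactly Theorem~\ref{xmNIP}, invoked later in the present paper). Your treatment of the \emph{moreover} clause inherits the same difficulty: for an arbitrary formula $\phi$ with the order property there is no well-defined set of ``atomic tuples responsible for $\phi(\bar a_i,\bar b_j)$'' that one can delete, since $\phi$ may involve nested quantifiers and negations whose truth depends globally on the structure. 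The reduction to a low-complexity formula is again the missing step, and it is the real content of the theorem.
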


In light of the above, Theorem \ref{th:maintheorem} answers the question of Adler and Adler affirmatively by connecting the picture arising in Theorem \ref{th:exmNIP} with the sparsity-theoretic properties of the Gaifman class. 

\subsection{Model-checking}

    By model-checking on a class $\C$ we refer to the following parametrised decision problem:
    
    \begin{table}[!h]
        \begin{tabular}{rl}
             \textsc{Given}: & A $\FO$-sentence $\phi$ and a structure $M \in \C$. \\
             \textsc{Parameter}: &  $|\phi|$. \\
             \textsc{Decide}: &  Whether or not $M$ satisfies $\phi$.
        \end{tabular}
    \end{table}
    
    \begin{definition}
        We say that $\C$ is \emph{tractable}, or that the model-checking problem on a class $\C$ is fixed-parameter tractable, if there is an algorithm that decides on input $(M, \phi)$ whether $G \models \phi$, in time $f(|\phi|) \cdot |M|^{\mathcal{O}(1)}$ for some computable function $f$. 
    \end{definition}

Model-checking on the class of all graphs is complete with respect to the complexity class $\mathsf{AW}[*]$, which is conjectured to strictly contain the class $\FPT$. We shall assume throughout that $\mathsf{AW}[*]\neq \FPT$. 

All hereditary classes of graphs and relational structures that are known to admit tractable model-checking are NIP. Moreover, the robustness of NIP in hereditary classes hints at its potential necessity for tractability. This is the basis of the following conjecture:
    
\begin{conjecture}[\cite{tww4}]\label{conj}
    Let $\C$ be a hereditary class of relational structures. Then $\C$ is tractable if, and only if, $\C$ is NIP.
\end{conjecture}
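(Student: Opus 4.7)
The statement is an open conjecture, so what follows is a plan for a research programme rather than a concrete proof outline; I will indicate where the known partial results would slot in and where the real obstacles lie.

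For the easier direction, tractable $\Rightarrow$ NIP (under $\mathsf{AW}[*] \neq \FPT$), I would argue by contrapositive. Suppose $\C$ is not NIP, so some formula $\phi(\bar x, \bar y)$ has the independence property in $\C$. The aim is to reduce first-order model-checking on the class $\Bfrak$ of all finite bipartite graphs, which is $\mathsf{AW}[*]$-complete, to model-checking on $\C$. Given a bipartite graph $G = (U, V; E)$, one must produce in polynomial time a structure $M_G \in \C$ and tuples $(\bar a_i)_{i \in U}$, $(\bar b_j)_{j \in V}$ in $M_G$ with $M_G \models \phi(\bar a_i, \bar b_j) \iff (i,j) \in E$. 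Any FO-sentence $\psi$ on bipartite graphs can then be translated, via relativisation of quantifiers to the definable copies of $U,V$ and substitution of $\phi$ for $E$, into an $\L$-sentence $\psi^\star$ with $G \models \psi \iff M_G \models \psi^\star$. The subtlety is effectivity: the IP definition only asserts the \emph{existence} of $M_G$. I would obtain polynomial-time constructibility by first applying Ramsey-theoretic and compactness arguments (leveraging hereditariness) to normalise $\phi$ to be witnessed by an indiscernible-like family of tuples sitting inside a single sufficiently rich structure of $\C$, and then cutting out $M_G$ as an induced substructure indexed by $U \cup V$.

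For the much harder direction, NIP $\Rightarrow$ tractable, I would build on the emerging programme linking monadic NIP to structural decompositions. By Theorem \ref{th:exmNIP}, NIP coincides with monadic NIP on hereditary classes, so one is free to use colourings and unary expansions without leaving the class of interest. The plan would combine two complementary strands. First, abstract the twin-width framework of Bonnet et al., which already yields $\FPT$ model-checking for bounded twin-width classes and for hereditary classes of ordered graphs, into a purely model-theoretic decomposition triggered by monadic NIP alone. Second, adapt the flip-width / flip-flatness technology recently used to handle the monadically stable case to the unstable NIP setting, where linear orders are permitted but all wild bipartite patterns remain forbidden. The partial cases already established, monotone classes (the present paper), ordered hereditary classes (Simon--Toru\'nczyk), and bounded twin-width, would serve as stress tests for any uniform argument.

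The principal obstacle is entirely on this second direction. No structural or combinatorial decomposition is currently known for arbitrary monadically NIP hereditary classes; the tools that succeed in the monadically stable setting (forbidden indiscernible patterns, flip-flatness, flatness of shallow minors) all exploit stability in an essential way, and it is precisely when order is permitted that a new idea is required. A reasonable intermediate target, which I would attack first, is to prove the conjecture under the extra hypothesis that $\C$ has bounded VC-dimension and excludes a "definable grid" in a suitable sense; this would subsume both the monotone result of the present paper and the ordered-hereditary theorem of Simon--Toru\'nczyk within a single framework and, I expect, would sharpen intuition about what the right structural invariant for general monadically NIP classes should be.
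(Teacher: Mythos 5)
This statement is an open conjecture, not a theorem of the paper: the paper only \emph{contributes} to it by settling the monotone special case (and cites the earlier ordered-hereditary case of Simon--Toru\'nczyk and the twin-width case of Bonnet et al.). You correctly recognise this, and your framing of the proposal as a research programme rather than a proof is appropriate. Your discussion of the "NIP $\Rightarrow$ tractable" direction is a fair account of the state of the art and of where the real obstacle lies.

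However, there is a genuine gap in your sketch of the "easier" direction, \emph{tractable $\Rightarrow$ NIP}, when generalised from monotone to hereditary classes. The step where you "cut out $M_G$ as an induced substructure indexed by $U \cup V$" is precisely where the paper's argument leans crucially on monotonicity, not hereditariness. In the paper's Theorem~\ref{prop:paths} and Theorem~\ref{thm:intractability}, after Ramsey-normalising the witnesses into a disjoint pre-coding configuration, the structure $M_G$ encoding an arbitrary bipartite graph $G=(U,V;E)$ is obtained from a complete-bipartite witness $M'_n$ by first passing to a \emph{weak substructure} (dropping individual relation tuples) and then deleting vertices. If $\C$ is only hereditary you cannot delete relation tuples, so one would need the ambient structure to already have exactly the non-edges you want absent, which the abstract definition of IP gives no handle on: IP asserts that every bipartite pattern appears \emph{somewhere} in $\C$, but does not give a single indiscernible-like family inside one structure from which every pattern can be carved out by vertex deletion alone. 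Your appeal to "Ramsey-theoretic and compactness arguments (leveraging hereditariness)" does not close this gap, and indeed the contrapositive "$\C$ hereditary and not NIP $\Rightarrow$ $\C$ intractable" is, to the best of current knowledge, itself open outside the monotone and ordered settings; it is part of what Conjecture~\ref{conj} asks, not a known warm-up.
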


There is good evidence for a positive answer to this conjecture. Indeed, it is known to hold for:
    \begin{itemize}
        \item Monotone classes of graphs, where NIP coincides with nowhere density \cite{nwdtract};
        \item Hereditary classes of ordered graphs, where NIP coincides with bounded twin-width \cite{tww}.
    \end{itemize}
    
Although it is not explicitly stated in this form, a careful examination of the argument of \cite{nwdtract} reveals that the following holds. 
    
\begin{theorem}[Grohe, Kreutzer, Siebertz, \cite{nwdtract}]
    Let $\C$ be a class of relational structures such that $\Gaif(\C)$ is nowhere dense. Then $\C$ admits fixed-parameter tractable model-checking. 
\end{theorem}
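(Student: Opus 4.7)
The plan is to reduce FO model-checking on $\C$ to FO model-checking on a class of coloured graphs whose underlying graph class is nowhere dense, and then invoke the Grohe--Kreutzer--Siebertz theorem for nowhere dense classes of graphs. The finite bounded arity of $\L$ is what makes the reduction uniform.

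For each $M\in\C$ I would construct an \emph{incidence encoding} $H(M)$: the vertices of $H(M)$ are the elements of $M$ together with a fresh ``tuple vertex'' $v_{R,\bar a}$ for every $R\in\L$ and every $\bar a\in R^M$, with edges joining $v_{R,\bar a}$ to the entries of $\bar a$. Unary predicates mark the original vertices, mark which relation $R$ each tuple vertex comes from, and---via a bounded-size gadget attached to each tuple vertex (e.g.\ one auxiliary vertex per position, coloured by $\mathrm{Pos}_i$ and adjacent to both $v_{R,\bar a}$ and $\bar a(i)$)---record the ordinal position of each entry. Since $\L$ is finite, each tuple contributes only a bounded constant number of auxiliary vertices. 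A routine syntactic translation then produces, for every FO $\L$-sentence $\varphi$, an FO sentence $\hat\varphi$ in the coloured-graph language such that $M\models\varphi$ iff $H(M)\models\hat\varphi$, with $|\hat\varphi|$ computable from $|\varphi|$ and $\L$.

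The crucial step is to show that $\Hcal := \{\Gaif(H(M)) : M\in\C\}$ is nowhere dense. In $\Gaif(H(M))$ every tuple vertex and every gadget vertex has degree bounded by a constant $d$ depending only on $\L$, so for $n>d$ any branch vertex of a topological copy of $K_n^r$ inside $\Gaif(H(M))$ must lie in the original vertex set $M$. Each internally disjoint subdivision path between two such branch vertices alternates between original and tuple/gadget vertices, and contracting every tuple gadget to an edge of $\Gaif(M)$ projects it to a walk in $\Gaif(M)$ of length at most $\lceil (r+1)/2\rceil$, which may be shortened to a path. Hence $K_n^r\subseteq\Gaif(H(M))$ forces $K_n^{r'}\subseteq\Gaif(M)$ for some $r'$ depending only on $r$ and $\L$, so nowhere density of $\Gaif(\C)$ propagates to $\Hcal$.

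Finally, the Grohe--Kreutzer--Siebertz algorithm is insensitive to additional unary predicates (they do not alter Gaifman graphs, and the algorithm relativises cleanly to coloured structures), so it applies to $\Hcal$ and yields an FPT algorithm deciding $H(M)\models\hat\varphi$ in time $f(|\hat\varphi|)\cdot|H(M)|^{O(1)} = g(|\varphi|)\cdot|M|^{O(1)}$. Composed with the linear-time construction of $H(M)$ from $M$ and the translation $\varphi\mapsto\hat\varphi$, this gives the desired FPT algorithm for FO model-checking on $\C$. The main obstacle is the preservation of nowhere density in the transition from $\Gaif(\C)$ to $\Hcal$: one must ensure that the position-encoding gadgets are small enough that shallow topological minors of $\Gaif(H(M))$ pull back to shallow topological minors of $\Gaif(M)$. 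This is precisely the step where the bounded arity of $\L$ is used in an essential way.
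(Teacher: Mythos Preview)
The paper does not actually prove this theorem: it is stated as a result of Grohe, Kreutzer and Siebertz, with the remark that ``a careful examination of the argument of \cite{nwdtract} reveals that the following holds.'' In other words, the paper's position is that the original GKS machinery (Gaifman locality, splitter games, rank-preserving neighbourhood covers, etc.) applies verbatim to relational structures once one observes that every step depends only on the Gaifman graph. Your proposal takes a genuinely different route: rather than re-reading GKS, you reduce to the coloured-graph case via an incidence encoding $H(M)$ and then invoke the black-box graph result. This buys self-containment---you do not need to open up \cite{nwdtract}---at the cost of the preservation lemma for nowhere density.

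Your preservation argument is essentially right but needs one small patch. When you project the internally disjoint subdivision paths from $\Gaif(H(M))$ down to $\Gaif(M)$, the resulting paths are indeed internally vertex-disjoint (an $M$-vertex appearing internally on two projected paths would already appear on two of the original paths), but they may have \emph{different} lengths bounded by some $r'=r'(r,\L)$. Thus you obtain $K_n$ as a depth-$r'$ topological minor of $\Gaif(M)$, not literally $K_n^{r'}$ as a subgraph. This is harmless because the two formulations of nowhere density are equivalent (e.g.\ a Ramsey argument on the $\binom{n}{2}$ path-lengths recovers an exact subdivision on a large subclique), but you should say so explicitly. With that adjustment the reduction is sound.
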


\subsection{Interpretations}

Interpretations in classical model theory allow us to find structures in some language in a definable way inside a definable quotient of structures in some other language, mimicking, for instance, the way one can find the rational numbers inside the integers. 

In our case, we focus on a restricted version of interpretations, which we call \emph{simple interpretations} (possibly with parameters). Intuitively, a class of $\L^\prime$-structures $\Dcal$ can be interpreted in a class of $\L$-structures $\C$ if there is a uniform way of defining every structure in $\Dcal$, in some (Cartesian power of some) structure $\C$. More formally:

\begin{definition}[Simple interpretation]\label{def:simple-interpretation}
Let $\L,\L'$ be two finite relational languages. A \emph{simple interpretation with parameters} $I:\Cfrak(\L) \to \Cfrak(\L')$ consists of the following data:
\begin{itemize}
    \item A \emph{domain formula} $\delta(\bar x,\bar v)\in\L$ and, a function $d$ which to each $M\in\Cfrak(\L)$ associates a tuple $\bar d(M)$ from $M^{|\bar v|}$.
    \item For each $k$-ary relation symbol $R(y_1,\dots,y_k)\in\L'$ an \emph{interpreting formula} $\phi_R(x_1,\dots,x_k,\bar v_R)\in\L$, where $|\bar x_i| = |\bar x|$, for each $i\in[k]$, and a function $c_R$ which to each $M\in\Cfrak(\L)$ associates a tuple $\bar c_R(M)$ from $M^{|\bar v_R|}$.
\end{itemize}
\end{definition}

In order to make our discussion of interpretations easier, we adopt the following notation. Given $M \in \Cfrak(\L)$ we write $I(M)$ for the $\L'$ structure on the set $\delta(M) := \{a\in M:M\vDash\delta(a,\bar d(M))\}$ with:
\begin{center}
    $I(M)\vDash R(a_1,\dots,a_k) \text{ if, and only if, } M\vDash\phi_R(a_1,\dots,a_k,\bar c_R(M)),$
\end{center}
for each $k$-ary relation symbol $R\in\L'$ and $a_1,\dots,a_k\in\delta(M,\bar{d}(M))$. This dually gives a map $\widehat{I}:\L' \to \L$ mapping $\L'$-formulas to $\L$-formulas with parameters, such that for any $\L'$-sentence $\phi$ we have that:
\begin{center}
    $M \models \widehat{I}(\phi) \text{ if, and only if, } I(M) \models \phi.$
\end{center}

In order to be able to reduce the problem of $\FO$ model-checking from one class of structures to another, possibly in a different language, we are interested in interpretations that can be computed in polynomial time. More precisely we define the following notion:

\begin{definition}[Polynomial interpretation]
    Given classes of structures $\C \subseteq \Cfrak(\L)$ and $\Dcal \subseteq \Cfrak(\L')$ we say that $\Dcal$ is \emph{polynomially interpreted in $\C$, with parameters,} if there are:
    \begin{enumerate}
        \item A simple interpretation with parameters, $I:\Cfrak(\L) \to \Cfrak(\L')$, as in \Cref{def:simple-interpretation}, such that the functions $d$ and $(c_R)_{R\in\L^\prime}$ are computable in polynomial time; and
        \item a polynomial-time computable map $f:\Dcal \to \C$ such that for all $D\in\Dcal$ we have that $D = I(f(D))$. 
    \end{enumerate}
    In this case, we write $\Dcal\leq_\Psf\C$.
\end{definition}

The next lemma justifies why polynomial interpretations are useful.

\begin{lemma}\label{lem:simple-interpretations-params}
    The relation $\leq_\Psf$ is a quasi-order on the collection of classes of structures in finite relational languages. Moreover $\leq_\Psf$ preserves tractability, i.e.\ if $\C$ is tractable and $\Dcal \preceq_\Psf \C$, then $\Dcal$ is tractable. 
\end{lemma}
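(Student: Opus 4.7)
The plan is to handle the two claims of the lemma separately. For the quasi-order claim, reflexivity is witnessed by the \emph{identity interpretation} $\id_\C:\Cfrak(\L)\to\Cfrak(\L)$, whose domain formula is $\delta(x)\equiv (x=x)$ (with no parameters) and whose interpreting formula for each $R\in\L$ is the atomic formula $R(\bar x)$ itself, paired with the identity map $\C\to\C$. For transitivity, suppose $\Ecal\subseteq\Cfrak(\L'')$, $\Dcal\subseteq\Cfrak(\L')$ and $\C\subseteq\Cfrak(\L)$, with witnesses $(I_1,f_1):\Ecal\leq_\Psf\Dcal$ and $(I_2,f_2):\Dcal\leq_\Psf\C$ where $I_1:\Cfrak(\L')\to\Cfrak(\L'')$ and $I_2:\Cfrak(\L)\to\Cfrak(\L')$. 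I would form a composite simple interpretation $I_2{*}I_1:\Cfrak(\L)\to\Cfrak(\L'')$ whose domain formula is the syntactic pullback $\widehat{I_2}(\delta_1)$, obtained by substituting the interpreting formulas of $I_2$ for each predicate symbol of $\L'$ appearing in $\delta_1(\bar x_1,\bar v_1)$ and by replacing each variable of $\bar x_1$ with a fresh $|\bar x_2|$-tuple of variables; the interpreting formulas of $I_2{*}I_1$ are obtained from those of $I_1$ in the same way. The parameters of $I_2{*}I_1$ on input $M\in\Cfrak(\L)$ consist of the parameters of $I_2$ together with those of $I_1$ applied to $I_2(M)$, both computable in polynomial time from $M$. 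Pairing with $f_2\circ f_1:\Ecal\to\C$ and using the defining identities $D=I_2(f_2(D))$ and $E=I_1(f_1(E))$ yields $(I_2{*}I_1)(f_2(f_1(E)))=E$, as required.

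For the tractability claim, fix a witness $(I,f)$ of $\Dcal\leq_\Psf\C$ and an $\FPT$ model-checking algorithm $\Acal$ for $\C$ running in time $g(|\psi|)\cdot |M|^{\Ocal(1)}$. On input $(D,\phi)$ to model-checking on $\Dcal$, I would first compute $f(D)\in\C$ and the parameters $\bar d(f(D))$ and $\bar c_R(f(D))$ for every $R\in\L'$ occurring in $\phi$; both steps run in polynomial time in $|D|$ by assumption. Next, I would compute the translation $\widehat{I}(\phi)$, a purely syntactic transformation that depends only on $\phi$ and on the fixed description of $I$, so $|\widehat{I}(\phi)|\leq h(|\phi|)$ for a computable $h$ determined by $I$ alone. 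Running $\Acal$ on $(f(D),\widehat{I}(\phi))$ decides whether $f(D)\models\widehat{I}(\phi)$, which by the defining property of $\widehat{I}$ is equivalent to $I(f(D))\models\phi$, and hence to $D\models\phi$. The total running time is bounded by $g(h(|\phi|))\cdot |f(D)|^{\Ocal(1)}=g'(|\phi|)\cdot |D|^{\Ocal(1)}$, which is of the required $\FPT$ form.

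The argument is essentially bookkeeping, and I expect the only mildly delicate step to be the composition in the transitivity claim: one must carefully unwind that the elements of $I_2(M)$ are already tuples from $M^{|\bar x_2|}$, so that the resulting composite genuinely fits Definition~\ref{def:simple-interpretation} (with variable-tuple length $|\bar x_1|\cdot |\bar x_2|$) and so that the parameters of $I_1$ on $I_2(M)$ can be produced, and computed in polynomial time, directly as functions of $M$ itself.
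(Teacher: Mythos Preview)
Your proposal is correct and follows essentially the same approach as the paper: reduce model-checking on $\Dcal$ to model-checking on $\C$ via $f$ and the syntactic translation $\widehat{I}$, with the $\FPT$ bound following from $|\widehat{I}(\phi)|$ being bounded in terms of $|\phi|$ alone. The paper dismisses the quasi-order part as ``immediate'' without spelling it out, so your explicit treatment of reflexivity and transitivity (including the care about tuple-length $|\bar x_1|\cdot|\bar x_2|$ in the composite) is more detailed than, but consistent with, the paper's argument.
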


\begin{proof}
The first part of the lemma is immediate, so let us only discuss the second part. We reduce the problem of model checking in $\Dcal$ to model checking in $\C$. Given an $\L^\prime$-sentence $\phi$ and an $\L^\prime$-structure $M\in\Cfrak(\L^\prime)$, we can compute, by assumption, in polynomial time an $\L$-structure $f(D)\in\C$ such that $M=I(f(D))$. By assumption, we can also compute $I(f(D))$ in polynomial time, since the parameters in the domain and interpreting formulas are computable from $M$ in polynomial time. Then, we have that:   
\begin{center}
    $f(D) \models \widehat{I}(\phi) \text{ if, and only if, } I(f(D)) = M \models \phi,$
\end{center} 
where $\widehat{I}(\phi)$ is obtained, essentially, as in the discussion after \Cref{def:simple-interpretation}, which can clearly be done in polynomial time, from $\phi$. Since $\C$ is tractable, it follows that $\Dcal$ is tractable.
\end{proof}

\subsection{Ramsey Theory}\label{subs:ramsey}

A core technique that is used repeatedly in our arguments is that if a finite structure is large enough, then patterns in it are inevitable. This is the main idea of Ramsey theory, the relevant tools from which we recall here.

\begin{theorem}[Ramsey's Theorem, \cite{ramsey}]\label{th:ramsey}
    There is a computable function $\Rcal:\N^3 \to \N$ such that for all $m,k,r \in \N$ and for every colouring $\chi: [\Rcal(m,k,r)]^{(k)} \to [r]$ there exists some $S \subseteq [\Rcal(m,k,r)]$ of size $m$ which is monochromatic. 
\end{theorem}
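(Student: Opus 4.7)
My plan is to prove the statement by induction on the arity $k$ of the colouring, recursively defining $\Rcal(m, k, r)$ in terms of $\Rcal(m, k-1, r)$; computability of $\Rcal$ will be immediate from the recursive definition. The base case $k = 1$ is the pigeonhole principle: $\Rcal(m, 1, r) := r(m-1)+1$ works, since any $r$-colouring of that many points yields a colour class of size at least $m$.

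For the inductive step, assume $\Rcal(\cdot, k-1, r)$ has been defined and let $N := \Rcal(m, k, r)$ (to be specified). Given $\chi : [N]^{(k)} \to [r]$, I will greedily construct a sequence $x_1, x_2, \ldots, x_M$ in $[N]$ together with a decreasing chain $[N] = A_0 \supseteq A_1 \supseteq \cdots$ satisfying $x_{i+1} \in A_i$ and the homogeneity invariant that for every $(k-1)$-subset $T \subseteq \{x_1, \ldots, x_i\}$, the value $\chi(T \cup \{a\})$ is the same for all $a \in A_i$. To maintain the invariant, pick any $x_{i+1} \in A_i$ and refine $A_i \setminus \{x_{i+1}\}$ according to the colours $\chi(T \cup \{a\})$ for each of the $\binom{i}{k-2}$ new $(k-1)$-subsets $T \subseteq \{x_1, \ldots, x_{i+1}\}$ that contain $x_{i+1}$, taking $A_{i+1}$ to be the largest resulting cell; this costs a factor of $r^{\binom{i}{k-2}}$ in size.

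Once the sequence reaches length $M$, define an auxiliary $(k-1)$-ary colouring $\chi' : \{x_1, \ldots, x_{M-1}\}^{(k-1)} \to [r]$ by setting $\chi'(T) := \chi(T \cup \{a\})$ for any $a \in A_j$, where $j = \max\{i : x_i \in T\}$; by the invariant this is well-defined. Choosing $N$ large enough that $M - 1 \geq \Rcal(m, k-1, r)$, the inductive hypothesis supplies a $\chi'$-monochromatic set $S \subseteq \{x_1, \ldots, x_{M-1}\}$ of size $m$. A short check shows that $S$ is $\chi$-monochromatic: given a $k$-subset $U \subseteq S$, set $x_{j_k} := \max U$ and $T := U \setminus \{x_{j_k}\}$; since $x_{j_k} \in A_{j_k - 1} \subseteq A_{j_{k-1}}$, the homogeneity invariant forces $\chi(U) = \chi'(T)$, which is the common value on $S$.

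The one technical hurdle is bookkeeping: one must verify that $N$ can be chosen large enough for the greedy process to run for $\Rcal(m, k-1, r) + 1$ steps in the face of losing a factor of $r^{\binom{i}{k-2}}$ at the $i$-th stage. A straightforward (if crude) calculation shows that an iterated-exponential expression in $\Rcal(m, k-1, r)$, $r$, and $k$ suffices, which yields a tower-type but visibly computable bound on $\Rcal(m, k, r)$; the precise shape of the bound is irrelevant for the applications in the sequel.
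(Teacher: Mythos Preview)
Your proof is correct and follows the standard ``focusing'' argument for the finite Ramsey theorem: induct on the arity $k$, greedily build a long sequence with the homogeneity invariant, and reduce to the $(k-1)$-ary case via the auxiliary colouring $\chi'$. The verification that a $\chi'$-monochromatic set is $\chi$-monochromatic is handled correctly, and the crude tower-type bound on $N$ is enough to ensure computability of $\Rcal$.

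There is nothing to compare against, however: the paper does not supply its own proof of this statement. Ramsey's Theorem is quoted as a classical result with a citation to Ramsey's original paper, and only the existence of a computable bound $\Rcal$ is used downstream (in the proof of Lemma~\ref{lem:bipartite}). Your argument is a perfectly acceptable proof of the quoted statement, but the paper simply takes it as known.
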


Another standard theorem from Ramsey theory that we make use of is the following well-known variant of \Cref{th:ramsey}:

\begin{theorem}[Bipartite Ramsey Theorem]\label{th:partramsey}
    There is a computable function $\Pcal:\N^2 \to \N$ such that for all $m,r\in\N$ and all edge colourings of the complete bipartite graph  $K_{\Pcal(m,r),\Pcal(m,r)}$ with $r$ colours, there are subsets $A_,B$ of the two parts, both of size $m$, which induce a monochromatic copy of $K_{m,m}$.
\end{theorem}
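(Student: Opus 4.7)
The plan is to prove the Bipartite Ramsey Theorem by an iterative pigeonhole (shrinking neighbourhood) argument, which gives an explicit computable bound. Fix $m, r \in \N$ and set $N := m \cdot r^{mr}$, which will serve as the value of $\Pcal(m,r)$. I would consider the complete bipartite graph $K_{N,N}$ with parts $U = \{u_1, \dots, u_N\}$ and $V$, together with an edge colouring $\chi : U \times V \to [r]$, and then construct the desired monochromatic $K_{m,m}$ in two phases.

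In the first phase, I would greedily select vertices $u_{i_1}, \dots, u_{i_{mr}} \in U$ together with a decreasing chain $V = V_0 \supseteq V_1 \supseteq \dots \supseteq V_{mr}$ of subsets of $V$, such that at each step $j$, the vertex $u_{i_j}$ has a single colour $c_j \in [r]$ on all of $V_j$. Concretely, having chosen $V_{j-1}$, pick any $u_{i_j} \in U$ not yet chosen; by pigeonhole applied to the restriction of $\chi(u_{i_j}, \cdot)$ to $V_{j-1}$, some colour class contains at least $|V_{j-1}|/r$ elements, which I take as $V_j$. By induction, $|V_{mr}| \geq N/r^{mr} = m$.

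In the second phase, I apply pigeonhole again, this time to the sequence of colours $c_1, \dots, c_{mr} \in [r]$: some colour $c \in [r]$ is attained by at least $m$ indices $j_1 < \dots < j_m$. Setting $A := \{u_{i_{j_1}}, \dots, u_{i_{j_m}}\}$ and $B$ to be any $m$-element subset of $V_{mr}$, every edge in $A \times B$ has colour $c$, because each $u_{i_{j_\ell}}$ has colour $c$ on all of $V_{j_\ell} \supseteq V_{mr} \supseteq B$. This yields the desired monochromatic $K_{m,m}$, and the whole construction is obviously effective, so $\Pcal$ is computable.

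There is no real obstacle here beyond bookkeeping; the only care needed is to make sure the bound $N = m \cdot r^{mr}$ is large enough for the two pigeonhole applications to succeed simultaneously, and that the chain of subsets shrinks by a factor of at most $r$ at each step. Alternatively, one could deduce this theorem from \Cref{th:ramsey} applied to the $2$-element subsets of $[2N]$ with a colouring that encodes which vertices lie in $U$ versus $V$, but the direct argument above gives a cleaner and more easily computable bound.
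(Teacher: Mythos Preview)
Your proof is correct. The paper does not actually give a proof of this theorem; it is stated in Section~\ref{subs:ramsey} as a ``well-known variant'' of Ramsey's theorem and used as a black box. Your iterated pigeonhole argument is a standard and perfectly valid way to establish it, and the explicit bound $\Pcal(m,r) = m \cdot r^{mr}$ is indeed computable, which is all the paper requires. The only points worth double-checking are that $|U| = m r^{mr} \geq mr$ (so that the $mr$ selections from $U$ are possible) and that the degenerate cases $m=0$ or $r\leq 1$ are trivially covered; both hold.
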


We also need to make use of the following Ramsey-theoretic result, where the number of colours is allowed to be possibly infinite. Of course, in this case, we cannot expect to find monochromatic subsets. Nonetheless, we can ensure that the behaviour of the colouring falls into one of few ``canonical'' cases on a large enough set. The original canonical Ramsey theorem is due to Erd\H{o}s and Rado \cite{erdos}, but for the purposes of this paper, we are only interested in the bipartite version in its effective form. 

\begin{theorem}[Bipartite Canonical Ramsey Theorem,  \cite{KMV_2017}]\label{canramsey}
There is a computable function $\Kcal: \N \to \N$ such that for every $n \in \N$ and every edge-colouring of the complete bipartite graph $K_{\Kcal(n),\Kcal(n)}$ there exist subsets $X$,$Y$ of the two parts, both of size $n$, such that one of the following occurs for all $x, x' \in X$ and $y, y' \in Y$:
\begin{enumerate}
    \item $\chi(x,y)=\chi(x',y')$;
    \item $\chi(x,y)=\chi(x',y')$ if, and only if, $x = x'$;
    \item $\chi(x,y)=\chi(x',y')$ if, and only if, $y=y'$;
    \item $\chi(x,y)=\chi(x',y')$ if, and only if, $x = x'$ and $y = y'$.
\end{enumerate}

    \begin{figure}[H]
  \centering\small

  $(1):$
  \adjustbox{scale = 0.7}{
\begin{tikzcd}
\CIRCLE \arrow[rrdd, no head, color = blue, line width = 1.5pt] \arrow[rr, no head, color = blue, line width = 1.5pt] \arrow[rrdddd, no head, color = blue, line width = 1.5pt] &  & \CIRCLE \\
                                                                    &  &   \\
\CIRCLE \arrow[rruu, no head, color = blue, line width = 1.5pt] \arrow[rr, no head, color = blue, line width = 1.5pt] \arrow[rrdd, no head, color = blue, line width = 1.5pt]   &  & \CIRCLE \\
                                                                    &  &   \\
\CIRCLE \arrow[rr, no head, color = blue, line width = 1.5pt] \arrow[rruu, no head, color = blue, line width = 1.5pt] \arrow[rruuuu, no head, color = blue, line width = 1.5pt] &  & \CIRCLE
\end{tikzcd}
}
  $(2):$
  \adjustbox{scale = 0.7}{
\begin{tikzcd}
\CIRCLE \arrow[rrdd, no head, color = blue, line width = 1.5pt] \arrow[rr, no head, color = blue, line width = 1.5pt] \arrow[rrdddd, no head, color = blue, line width = 1.5pt] &  & \CIRCLE \\
                                                                    &  &   \\
\CIRCLE \arrow[rruu, no head, color = red, line width = 1.5pt] \arrow[rr, no head, color = red, line width = 1.5pt] \arrow[rrdd, no head, color = red, line width = 1.5pt]   &  & \CIRCLE \\
                                                                    &  &   \\
\CIRCLE \arrow[rr, no head, color = green, line width = 1.5pt] \arrow[rruu, no head, color = green, line width = 1.5pt] \arrow[rruuuu, no head, color = green, line width = 1.5pt] &  & \CIRCLE
\end{tikzcd}
}
  $(3):$
  \adjustbox{scale = 0.7}{
\begin{tikzcd}
\CIRCLE \arrow[rrdd, no head, color = red, line width = 1.5pt] \arrow[rr, no head, color = blue, line width = 1.5pt] \arrow[rrdddd, no head, color = green, line width = 1.5pt] &  & \CIRCLE\\
                                                                    &  &   \\
\CIRCLE\arrow[rruu, no head, color = blue, line width = 1.5pt] \arrow[rr, no head, color = red, line width = 1.5pt] \arrow[rrdd, no head, color = green, line width = 1.5pt]   &  & \CIRCLE \\
                                                                    &  &   \\
\CIRCLE\arrow[rr, no head, color = green, line width = 1.5pt] \arrow[rruu, no head, color = red, line width = 1.5pt] \arrow[rruuuu, no head, color = blue, line width = 1.5pt] &  & \CIRCLE
\end{tikzcd}
}
  $(4):$
  \adjustbox{scale = 0.7}{
\begin{tikzcd}
\CIRCLE \arrow[rrdd, no head, color = blue, line width = 1.5pt] \arrow[rr, no head, color = red, line width = 1.5pt] \arrow[rrdddd, no head, color = black, line width = 1.5pt] &  & \CIRCLE \\
                                                                    &  &   \\
\CIRCLE \arrow[rruu, no head, color = yellow, line width = 1.5pt] \arrow[rr, no head, color = green, line width = 1.5pt] \arrow[rrdd, no head, color = brown, line width = 1.5pt]   &  & \CIRCLE \\
                                                                    &  &   \\
\CIRCLE \arrow[rr, no head, color = purple, line width = 1.5pt] \arrow[rruu, no head, color = orange, line width = 1.5pt] \arrow[rruuuu, no head, color = pink, line width = 1.5pt] & &  \CIRCLE
\end{tikzcd}
}

\end{figure} 
\end{theorem}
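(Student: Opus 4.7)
The strategy is to canonize the equality pattern of $\chi$ by iteratively applying Theorems~\ref{th:ramsey} and~\ref{th:partramsey} to a succession of auxiliary colourings. Given an edge colouring $\chi$ of $K_{N,N}$ with parts $L,R$, I would first introduce the two $2$-colourings
\[
c_1(x;\{y,y'\}) := [\chi(x,y)=\chi(x,y')], \quad c_2(y;\{x,x'\}) := [\chi(x,y)=\chi(x',y)],
\]
defined on $L\times\binom{R}{2}$ and $R\times\binom{L}{2}$ respectively. Each is a $2$-colouring of a bipartite $3$-uniform hypergraph and can be canonized on some product set $L'\times R'$ via a bipartite $3$-uniform hypergraph Ramsey theorem, derivable by iterating Theorem~\ref{th:partramsey} over the $\binom{R}{2}$- (resp.\ $\binom{L}{2}$-)slots. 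Applying these two canonizations in succession produces $L_2 \subseteq L$, $R_2 \subseteq R$ on which both $c_1$ and $c_2$ are constant; in particular, for each $x \in L_2$ the row $\chi(x,\cdot)\restr_{R_2}$ is either uniformly constant or uniformly injective (with the dichotomy uniform in $x$), and symmetrically for columns.

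A case analysis then handles most of the theorem. If rows and columns are both constant, $\chi$ is globally constant, giving case~(1). If rows are constant but columns injective, then $\chi(x,y)=g(x)$ with $g$ forced to be injective by column-injectivity, yielding case~(2); the symmetric situation gives case~(3). The only remaining case is when rows and columns are both injective, and here I would apply a third Ramsey reduction to
\[
c_3(\{x,x'\};\{y,y'\}) \in \{A,B,C\}
\]
on $\binom{L_2}{2} \times \binom{R_2}{2}$, recording which of the two cross-equalities $\chi(x,y)=\chi(x',y')$ and $\chi(x,y')=\chi(x',y)$ hold: neither ($A$), exactly one ($B$), or both ($C$). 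No other patterns are possible, since row- and column-injectivity forbid coincidences within a row or column. Canonizing $c_3$ via a bipartite $4$-uniform hypergraph Ramsey on $L_3\subseteq L_2, R_3\subseteq R_2$: pattern $A$ gives global injectivity of $\chi$, i.e.\ case~(4); pattern $C$ immediately forces $\chi(x_2,y_2)=\chi(x_3,y_2)$ (violating column-injectivity) as soon as $|L_3|\geq 3$; and for pattern $B$, a short bookkeeping argument with three row-vertices pins down $\chi$ up to three colours on a $3\times 2$ block, after which introducing a fourth row-vertex in either of the two sub-cases of pattern $B$ forces a within-column repetition, contradicting column-injectivity once $|L_3|\geq 4$.

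The main obstacle is effective bookkeeping rather than combinatorial depth. Deriving the bipartite $3$- and $4$-uniform hypergraph Ramsey statements from Theorems~\ref{th:ramsey} and~\ref{th:partramsey} introduces a tower of Ramsey-type bounds at each step, and composing three such reductions yields a computable --- if very rapidly growing --- function $\Kcal(n)$ of primitive-recursive type. By contrast, the case analysis and the short propagation arguments ruling out patterns $B$ and $C$ are essentially applications of the pigeonhole principle once the hypergraph Ramsey reductions have been set up, so I do not expect any significant technical difficulty beyond carefully choosing the input sizes at each iteration so that the final subsets have size at least $\max(n,4)$ on each side.
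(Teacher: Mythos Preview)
The paper does not give a proof of this theorem; it is quoted as a result from \cite{KMV_2017} and used as a black box in the proof of Lemma~\ref{lem:disjoint}. So there is no ``paper's own proof'' to compare against.

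That said, your sketch is essentially correct and follows the standard template for canonical Ramsey results: first canonize the within-row and within-column equality patterns by auxiliary finite colourings, then dispatch the three easy cases directly, and finally in the doubly-injective case canonize the cross-equality pattern and eliminate the degenerate possibilities by a short pigeonhole argument. Your elimination of patterns $B$ and $C$ is right; in particular, your $3\times 2$ block for pattern $B$ is forced (up to relabelling) to be the cyclic $a,b\,/\,c,a\,/\,b,c$ configuration, and adding a fourth row immediately contradicts column-injectivity.

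The one place where you are a little loose is the derivation of the ``bipartite $3$- and $4$-uniform hypergraph Ramsey'' statements you invoke: you want, for a finite colouring of $L\times\binom{R}{2}$ (respectively $\binom{L}{2}\times\binom{R}{2}$), large $L'\subseteq L$ and $R'\subseteq R$ on which the colouring is constant on the full product $L'\times\binom{R'}{2}$. This does not follow directly from Theorem~\ref{th:partramsey} by ``iterating over the $\binom{R}{2}$-slots'' in any obvious way, since bipartite Ramsey on $L\times\binom{R}{2}$ only gives a monochromatic $L'\times S$ with $S\subseteq\binom{R}{2}$ arbitrary, not of the form $\binom{R'}{2}$. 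What you actually need is the product (polarized) Ramsey theorem, which is standard and computable but requires a separate inductive argument. Once you acknowledge that, the rest of your outline goes through.
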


Henceforth, we shall say that an edge colouring of a complete bipartite graph is \emph{canonical of type} $1$ (resp.\ $2,3,4$) if it satisfies condition 1 (resp.\ $2,3,4$) from Theorem \ref{canramsey} for all edges. More generally, we say that such a colouring is \emph{canonical} whenever it is canonical of any type.

\subsection{Canonical structures}
Recall that a formula $\phi(\bar x)$ is called \emph{primitive positive} if it has the form $\exists \bar y \psi(\bar x, \bar y)$, where $\psi$ is a conjunction of atomic formulas. We say that $\phi(\bar x)$ is \emph{quasi-positive} if it has the form $\exists \bar y \psi(\bar x, \bar y)$, where $\psi$ is a conjunction of atomic formulas and formulas of the form $v_i\neq v_j$, where $v_i,v_j$ are variables of $\phi$.  Primitive positive formulas are also known as \emph{conjunctive queries} in the database theory literature.  The following association of a canonical structure with a primitive positive formula and conversely a canonical such formula with a finite structure goes back to Chandra and Merlin~\cite{CM77}.

\begin{definition}[Canonical structures]
Given a primitive positive formula $\phi(\bar x) = \exists \bar y \psi(\bar x, \bar y)$ we define a pointed $\L$-structure $(M_\phi, \bar x)$ whose domain is the set $\{v_1,\dots,v_r\}$ of variables of $\phi$, and where each $R \in \L$ is interpreted as follows:
\[M_\phi\vDash R(v_1, \dots,v_n) \iff R(v_1,\dots,v_n) \text{ appears as a conjunct in $\psi(\bar x,\bar y)$.}\]

The pointed elements $\bar x$ precisely correspond to the free variables of $\phi$. This structure is unique, up to isomorphism, and we call it \emph{the canonical structure of $\phi$}.
\end{definition}

Similarly, for every pointed $\L$-structure $(A,\bar x)$ we may associate a primitive positive formula $\phi_A(\bar x)$ so that $(\Mcal_{\phi_A},\bar x)=(A,\bar x)$. We call this formula the \emph{canonical formula} of $(A,\bar x)$. Let $\phi(\bar x)$ be a primitive positive formula and $(M_\phi,\bar x)$ its canonical structure. It is easy to see that for any $\L$-structure $A$ and $\bar a \in A$ we have that $A \models \phi(\bar a)$ if, and only if, there exists a homomorphism (of pointed structures) $h: (\Mcal_\phi,\bar x) \to (A, \bar a)$. 

In fact, the satisfaction of a primitive positive formula depends only on the satisfaction of each individual connected component of its canonical structure. More precisely we have the following:

\begin{lemma}\label{ppcomponents}
Let $\phi(\bar x)$ be a primitive positive formula, and $\Mcal_\phi$ its canonical structure. Let $G_1,\dots G_n$ be the connected components of $\Gaif(\Mcal_\phi)\setminus \bar x$, and $M_i = \Mcal_\phi[G_i,\bar x]$. Then, the following are equivalent, for any $\L$-structure $A$ and tuple $\bar a \in A$:
\begin{enumerate}
    \item $A \models \phi(\bar a)$.
    \item  For each  $i \in [n]$ there exists a homomorphism  $h_i:(M_i,\bar x)\to(A,\bar a)$.
\end{enumerate}
\end{lemma}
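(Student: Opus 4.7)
The plan is to deduce the lemma from the standard correspondence between satisfaction of a primitive positive formula and existence of a homomorphism from its canonical structure. Recall that $A \models \phi(\bar a)$ if, and only if, there is a homomorphism of pointed structures $h : (\Mcal_\phi, \bar x) \to (A, \bar a)$. So the task reduces to showing that such a global homomorphism exists if, and only if, its restrictions to each of the pieces $M_i = \Mcal_\phi[G_i, \bar x]$ exist.

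The direction (1) $\Rightarrow$ (2) is immediate: if $h : (\Mcal_\phi, \bar x) \to (A, \bar a)$ is a homomorphism of pointed structures, then for each $i \in [n]$ the restriction $h_i := h \restr (G_i \cup \bar x)$ is a homomorphism $(M_i, \bar x) \to (A, \bar a)$, because $M_i$ is by definition the substructure of $\Mcal_\phi$ induced on $G_i \cup \bar x$, and $h$ sends $\bar x$ to $\bar a$.

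For (2) $\Rightarrow$ (1), given homomorphisms $h_i : (M_i, \bar x) \to (A, \bar a)$, I would amalgamate them into a single map $h : \Mcal_\phi \to A$ defined by $h(x_j) := a_j$ on the free variables and $h(v) := h_i(v)$ whenever $v \in G_i$. This is well-defined: the components $G_1, \dots, G_n$ partition the non-free vertices of $\Gaif(\Mcal_\phi)$, so each non-free $v$ lies in a unique $G_i$; the free variables are handled uniformly because every $h_i$ is required to send $\bar x$ to $\bar a$. It remains to check that $h$ is a homomorphism, i.e.\ that for every relation symbol $R \in \L$ and every tuple $(v_1, \dots, v_k) \in R^{\Mcal_\phi}$, the image $(h(v_1), \dots, h(v_k))$ lies in $R^A$. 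The key observation is that in the Gaifman graph of $\Mcal_\phi$ the coordinates of such a tuple form a clique, so after deleting $\bar x$ all the remaining coordinates stay pairwise adjacent and therefore lie in a single component $G_i$. Hence the whole tuple $(v_1, \dots, v_k)$ belongs to $M_i = \Mcal_\phi[G_i, \bar x]$, and $h_i$ already certifies $(h_i(v_1), \dots, h_i(v_k)) \in R^A$; since $h$ agrees with $h_i$ on $M_i$, we are done.

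The only genuine subtlety, and the main thing I would be careful about, is the clique observation in the previous paragraph: atoms of $\Mcal_\phi$ that mix free and non-free variables must have all their non-free variables in the same component after removing $\bar x$, which is precisely what allows the local homomorphisms to be stitched together. A small edge case is when every variable of $\phi$ is free (so $n = 0$); in that situation $\phi(\bar x)$ is a conjunction of atoms over $\bar x$ alone and one should read condition (2) as also asserting the existence of a homomorphism from $\Mcal_\phi$ itself, or equivalently note that the lemma is applied only when at least one existentially quantified variable appears.
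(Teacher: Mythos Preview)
Your proof is correct and follows essentially the same approach as the paper: define $h$ as the union of the $h_i$, observe it is well-defined because the $G_i$ are disjoint and all $h_i$ agree on $\bar x$, and verify it is a homomorphism by noting that the non-free coordinates of any relational tuple lie in a single component. You are in fact slightly more explicit than the paper (spelling out the clique observation and flagging the $n=0$ edge case), but the argument is the same.
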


\begin{proof}
Only the implication $(2)\implies(1)$ is non-trivial. Given a collection of homomorphisms $h_i:(M_i,\bar x)\to (A,\bar a)$ as in $(2)$, their union $h = \bigcup h_i$ is a homomorphism $(\Mcal_\phi,\bar x)\to (A,\bar a)$. This is well-defined as the components $G_1,\dots, G_n$ are disjoint, and hence $M_i\cap M_j=\bar x$ for $i\neq j$ which has the same image under all the $h_i$'s.  Also, if $(v_1, \dots, v_n) \in R^{\Mcal_\phi}$ then the elements in $\bar v \setminus \bar x$ lie in the same connected component of $\Gaif(\Mcal_\phi)\setminus \bar x$, say $G_k$. Since $h_k$ is a homomorphism, it follows that $h_k(\bar v)=h(\bar v) \in R^A$. 
\end{proof}

Given a quasi-positive formula $\phi(\bar x)=\exists \bar y \psi(\bar x,\bar y)$, we write $\tilde \phi(\bar x)$ for the primitive positive formula obtained by removing the conjuncts of the form $v_i \neq v_j$ from $\psi$. We then define the canonical structure of $\phi$ to be $\Mcal_{\tilde \phi}$. It is easy to see that, in this case, $A \models \phi(\bar a)$ if, and only if, there is a homomorphism $h: (\Mcal_{\tilde \phi},\bar x) \to (A, \bar a)$ satisfying $f(v_i)\neq f(v_j)$ for every conjunct $v_i \neq v_j$ from $\psi$. 

\section{Path formulas}\label{sec:path}
In our analysis, we argue that whenever a monotone class  
of relational structures has the independence property then this is witnessed by a certain kind of primitive positive formula. In the case of graphs, it is implicit in the work of Adler and Adler that the canonical structure of this primitive positive formula is a path in the standard graph-theoretic sense, i.e.\ a tuple $(x_1,\dots,x_n)$ of pairwise distinct elements such that $E(x_i,x_{i+1})$ for all $i \in [n-1]$. 

In this section, we introduce the analogue of (graph) paths that witnesses the independence property in general relational structures. We start with the following rather technical definition. 

\begin{definition}[Path]\label{def:path}
By a \emph{path of length $n$}, we mean an $\L$-structure $\mathbf{P}$ consisting of a sequence of tuples $\bar e_1, \dots, \bar e_n$ of non-repeating elements such that: 
\begin{itemize}
    \item $\mathbf{P}=\bigcup_{i \in [n]}\bar e_i$;
    \item $|\bar e_i\cap \bar e_{i+1}|=1$, for all $i < n$;
    \item $\bar e_i \not\subseteq \bar e_{i+1}$ and $\bar e_{i+1}\not\subseteq \bar e_i$, for all $i<n$;
    \item $\bar e_i\cap \bar e_{j} = \emptyset$, for all $j \in [n]\setminus\{i-1,i,i+1\}$;
    \item $R_i(\bar e_i)$, for some unique relation symbol $R_i \in \L$;
    \item $R(\bar a) \implies \bar a = \bar e_i$ for some $i \in [n]$, for all relation symbols $R \in \L$ and all tuples $\bar a \in \mathbf{P}$. 
\end{itemize}

We write $S(\textbf{P})= \bar e_1\setminus \bar e_2$ and call these the \emph{starting vertices}, while we write $F(\PP)= \bar e_n \setminus \bar e_{n-1}$ and call these the \emph{finishing vertices}. We refer to the tuples $\bar e_i$ as the steps of the path, and to the singletons in $\bar e_i \cap \bar e_{i+1}$ as the joints of the path. 
\end{definition}

Given a primitive positive formula $\phi(\bar x,\bar y,\bar z)$ (where $\bar z$ is possibly empty), we say that $\phi$ is a \emph{path formula} if there are $x_0 \in \bar x$ and $y_0 \in \bar y$ such that $\Mcal_\phi$ is a path with $x_0 \in S(\Mcal_\phi)$ and $y_0 \in F(\Mcal_\phi)$. Similarly, we call $\phi$ a \emph{simple path formula} if $\bar x \subseteq S(\Mcal_\phi)$ and $\bar y \subseteq F(\Mcal_\phi)$.

Note that technically, no graph $G$ can be a path under the above definition. Indeed, the last condition ensures that $E(G)$ cannot be symmetric as no permutation of a tuple appearing in a relation $R$ can appear in any other relation from $\L$. To avoid confusion, we always refer to paths in the standard graph-theoretic sense as \emph{graph paths}. 

Intuitively, a path formula $\phi(\bar x,\bar y)$ plays the role of a higher arity graph path from $\bar x$ to $\bar y$. However, under enough symmetry, it is possible that we cannot definably tell the direction of $\phi$, i.e. $\bar x$ and $\bar y$ look the same within $\phi$. This is formalised in the following definition, and is important in the proof of Theorem \ref{thm:intractability}.

\begin{definition}[Symmetric path]\label{def:symmetric}
A \emph{symmetric path} is a path $\mathbf{P}$ of length $n$, such that $R_i=R_{n+1-i}$ for all $i \in [n]$. A \emph{symmetric path formula} $\phi(\bar x,\bar y,\bar z)$ is a simple path formula with $|\bar x|=|\bar y|=m$ such that $\Mcal_\phi$ is a symmetric path and there is an automorphism $f$ of $\Mcal_\phi$ which maps
    $\bar x=(x_1,\dots,x_m) \mapsto (y_{\sigma(1)},\dots,y_{\sigma(m)})$ and
    $\bar y=(y_1,\dots,y_m) \mapsto (x_{\sigma^{-1}(1)},\dots,x_{\sigma^{-1}(m)})$,
for some $\sigma \in \Scal_m$ which is not the identity permutation. Moreover, if $\phi$ contains parameters then these must be fixed by $f$.
\end{definition}

Given an $\L$-structure and a graph path in $\Gaif(M)$, we may produce a path formula that describes a  ``type'' for this path. This idea is captured by the following definition which is relevant for the proof of Lemma \ref{lem:bipartite}. 

\begin{definition}[Path type]\label{def:pathtype}
    Let $M$ be an $\L$-structure, and $S=(u_1,\dots,u_n)$ a graph path in $\Gaif(M)$. For every $i \in [n-1]$ we may associate a relation symbol $R_i \in \L$, elements $v_{i,1},\dots,v_{i,\ar(R_i)}$, and a permutation $\sigma_i \in S_{\ar(R_i)}$ such that $\sigma_i(u_i,u_{i+1},\bar v_i) \in R_i^M$. Then we call the formula 
    \[
    \begin{aligned}
        \phi(x,y,z_2,\dots,z_{n-1})=\exists \bar v_i \dots \bar v_{n-1}( R_1(\sigma_1(x,z_2,\bar v_1))\land R_2(\sigma_2(z_2,z_3,\bar v_2)) \\ \land \dots \land R_{n-1}(\sigma_{n-1}(z_{n-1},y,\bar v_{n-1})))
    \end{aligned}
    \]
    a \emph{path type} for the graph path $u_1,\dots, u_n$.  
\end{definition}

It is easy to see that whenever $S=(u_1,\dots,u_n)$ is a graph path in $\Gaif(M)$, then there is a path type $\phi$ for $S$ such that $M \models \phi(u_1,u_n,u_2,\dots, u_{n_1})$. Clearly, this is not uniquely determined by $S$, as for the same graph path $u_1,\dots,u_n$ in $\Gaif(M)$ we can possibly obtain different sequences of relations $R_i,\dots R_{i-1}$ and permutations $\sigma_1,\dots,\sigma_{i-1}$ as in Definition \ref{def:pathtype}. 

\section{From somewhere density to IP}\label{sec:ip}

The main result in this section is \Cref{prop:paths}, where we prove that for any monotone class $\C$ of relational structures whose Gaifman class is somewhere dense, there is a path formula (in the sense of \Cref{def:path}) which codes the edge relation of all bipartite graphs uniformly over $\C$. Our tools in the proof are the Ramsey-theoretic results from Section \ref{subs:ramsey}. These allow us to ensure various kinds of regularity, provided we work with suitably large finite structures. 

We work towards this theorem via two preparatory lemmas, which have the benefit of applying to classes that are not necessarily monotone. Intuitively, Lemma \ref{lem:bipartite} tells us that if $\C$ is a monotone class of relational structure whose Gaifman class is somewhere dense, then we can find a path formula that codes the edge relation of all finite complete bipartite graphs in $\C$.

\begin{lemma}\label{lem:bipartite}
    Let $\C$ be a class of $\L$-structures such that $\Gaif(\C)$ is somewhere dense. Then there is a path formula $\phi(x,y,\bar z)=\exists \bar w \psi(x,y,\bar z, \bar w)$ of length $\geq 2$ whose joints are precisely the variables in $\bar z$, and for each $n \in \N$ there is some $M_n \in \C$ and pairwise distinct elements $(a_i)_{i \in [n]}, (b_j)_{j \in [n]}, (\bar c_{i,j})_{(i,j) \in [n]^2}$ from $M_n$ such that
    \[M_n \models \phi(a_i,b_j,\bar c_{i,j}), \text{ for all }i,j \in [n].\]
\end{lemma}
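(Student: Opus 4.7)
The plan is to extract a large subdivided clique from the somewhere-dense Gaifman class, then apply bipartite Ramsey together with pigeonhole to uniformise the edge-witnesses along subdivision paths, and finally read off the required path formula from the fixed data.

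First, I would unpack the hypothesis: there is an $r \in \N$ such that for every $N$ some $M \in \C$ has $K_N^r \subseteq \Gaif(M)$ as a subgraph. I would arrange $r \geq 1$: if only $r = 0$ works, then $\Gaif(\C)$ contains arbitrarily large cliques, and since $K_n^1$ embeds as a subgraph of $K_{n + \binom{n}{2}}$ by using extra clique vertices as subdivision points, $r$ may be replaced by $1$. This secures the length-$\geq 2$ requirement.

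Next, let $k$ be the maximum arity in $\L$ and set $c := (|\L|\cdot k!)^{r+1}$. For each $n \in \N$, I pick $N \geq 2\Pcal(n,c)$ (with $\Pcal$ as in \Cref{th:partramsey}) and $M \in \C$ with $K_N^r \subseteq \Gaif(M)$, principal vertices $p_1,\dots,p_N$, and subdivision graph paths $P_{i,j}$ with internal vertices $u_1^{i,j},\dots,u_r^{i,j}$; inside the subgraph copy of $K_N^r$ these internal vertices are pairwise disjoint across pairs and avoid the principal vertices. For each edge $\{u,u'\}$ along some $P_{i,j}$, I pick a witness $(R,\sigma,\bar v)$ with $\sigma(u,u',\bar v)\in R^M$, assigning each pair $(p_i,p_j)$ a colour $\tau_{i,j}^{(n)}$ that is a length-$(r+1)$ sequence of $(R,\sigma)$-pairs. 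Applying \Cref{th:partramsey} to a bipartition of the principal vertices into halves of size $N/2$ yields $A^{(n)},B^{(n)}$ of size $n$ in opposite halves on which $\tau^{(n)}$ is constant. Since the colour space has size at most $c$, pigeonhole provides a single sequence $\tau = ((R_1,\sigma_1),\dots,(R_{r+1},\sigma_{r+1}))$ arising as $\tau^{(n)}$ for arbitrarily large $n$, and hence (by restricting $A^{(n)}, B^{(n)}$) for every $n$.

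I would then define
\[
\phi(x,y,z_1,\dots,z_r) \;=\; \exists \bar w_1\cdots \bar w_{r+1}\; \bigwedge_{\ell=1}^{r+1} R_\ell\bigl(\sigma_\ell(z_{\ell-1},z_\ell,\bar w_\ell)\bigr),
\]
with conventions $z_0 := x$, $z_{r+1} := y$, $|\bar w_\ell| = \ar(R_\ell) - 2$, and the $\bar w_\ell$ pairwise disjoint fresh tuples. A direct check against \Cref{def:path} shows $\phi$ is a path formula of length $r+1 \geq 2$ whose joints are exactly $\bar z$: its canonical structure has steps $\bar e_\ell = \sigma_\ell(z_{\ell-1},z_\ell,\bar w_\ell)$, consecutive steps share precisely $z_\ell$, non-consecutive steps are disjoint because the $\bar w_\ell$ are fresh, and the only atomic facts present are the listed ones. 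Setting $\bar c_{i,j} := (u_1^{a_i,b_j},\dots,u_r^{a_i,b_j})$, the Ramsey-chosen sequence $\tau$ together with the witnesses $\bar v$ fixed along $P_{a_i,b_j}$ give $M \models \phi(a_i,b_j,\bar c_{i,j})$, and pairwise distinctness of $\{a_i\}\cup\{b_j\}\cup\bigcup_{i,j}\bar c_{i,j}$ is immediate from the vertex-disjointness of the subdivision paths in the subgraph copy of $K_N^r$.

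The hard part should be bookkeeping rather than any deep new idea. The subtleties to watch out for are (a) the reduction to $r \geq 1$, using the embedding of $K_n^1$ into a larger clique; (b) the global pigeonhole step that produces a single $\phi$ working for every $n$, as the lemma requires a uniform formula; and (c) the verification that the explicit formula satisfies every condition of \Cref{def:path}, which is automatic once fresh existential variables are used for each step. Notably, no control over equalities among the side vertices $\bar w_\ell$ is required, since they are existentially quantified and never appear in the tuples $\bar c_{i,j}$.
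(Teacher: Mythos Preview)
Your proposal is correct and follows essentially the same route as the paper: extract a subdivided clique from somewhere density, push $r\geq 1$, colour pairs by the $(R,\sigma)$-data along the subdivision path, Ramsey to a monochromatic set, pigeonhole over $n$ to fix a single path type, and read off the formula. The only cosmetic difference is that the paper applies the ordinary Ramsey theorem to the $2$-subsets of the principal vertices (obtaining a monochromatic clique which is then split in half), whereas you bipartition first and invoke the bipartite Ramsey theorem; either version does the job.
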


\begin{proof}
    If $\Gaif(\C)$ is somewhere dense, then there exists $r \in \N$ such that for all $n \in \N$ there is some $M_n \in \Gaif(\C)$ with $K_n^r \leq \Gaif(M_n)$. Without loss of generality, we may assume that $r\geq 1$. Indeed, if $r=0$ then $K^1_n \leq K_{n^2} \leq \Gaif(M_{n^2})$ so we may pass to a subsequence of $(M_n)_{n \in \N}$ and relabel the indices appropriately. 
    
    For every $i < j$ from $[n]$ let $S^n_{i,j}$ be the graph path in $\Gaif(M_n)$ corresponding to the $r$-subdivision of the edge $(i,j)$ from $K_n$, directed from $i$ to $j$. Let $q \in \N$ be the maximum arity of a relation symbol $R \in \L$. Observe that there are at most $p=(|\L|\times q!)^{r+1}$ path types for each graph path $S^n_{i,j}$. By Ramsey's theorem we may find for each $n$ some $\Sigma_n \subseteq [\Rcal(n,2,q)]$ of size $n$ such that $S^{\Rcal(n,2,q)}_{i,j}$ have the same path type for all $i<j$ from $\Sigma_n$. By passing to a subsequence of $(M_n)_{n \in \N}$ and relabelling indices, we may therefore assume that all the $S^n_{i,j}$ have the same path type. Let this be $\phi_n$. Since there are only finitely many possible path types for every $n$, we may prune the sequence $(M_n)_{n \in \N}$ once again to ensure  that the same path type $\phi(x, y,\bar z)$ is obtained for all $n \in \N$. By definition, the joints of $\Mcal_\phi$ are precisely the variables in $\bar z$, while $\Mcal_\phi$ has length $\geq 2$ since $r \geq 1$.

\begin{figure}[H]
  \centering\small

  \adjustbox{scale=0.4}{
\begin{tikzcd}
                                                 &                              & \CIRCLE                     &                              &                              \\
                                                 &                              &                             &                              &                              \\
                                                 & \CIRCLE \arrow[ruu, no head, color = blue, line width = 1.8pt] &                             & \CIRCLE \arrow[luu, no head, color = yellow, line width = 1.8pt] &                              \\
                                                 &                              &                             &                              &                              \\
\CIRCLE \arrow[rr, no head, color= green, line width = 1.8] \arrow[ruu, no head, color = red, line width = 1.8] &                              & \CIRCLE \arrow[rr, no head, color= brown, line width = 1.8] &                              & \CIRCLE \arrow[luu, no head, line width = 1.8]
\end{tikzcd}
}
  $\implies$
  \adjustbox{scale=0.4}{
\begin{tikzcd}
                                                 &                              & \CIRCLE                     &                              &                              \\
                                                 &                              &                             &                              &                              \\
                                                 & \CIRCLE \arrow[ruu, no head, color = blue, line width = 1.8pt] &                             & \CIRCLE \arrow[luu, no head, color = blue, line width = 1.8pt] &                              \\
                                                 &                              &                             &                              &                              \\
\CIRCLE \arrow[rr, no head, color= blue, line width = 1.8] \arrow[ruu, no head, color = red, line width = 1.8] &                              & \CIRCLE \arrow[rr, no head, color= red, line width = 1.8] &                              & \CIRCLE \arrow[luu, no head, color = red, line width = 1.8]
\end{tikzcd}
}
  $\implies$
  \adjustbox{scale=0.4}{
\begin{tikzcd}
\CIRCLE \arrow[rrrr, no head, line width = 1.8, color=blue]                                  &  &  &                                                         & \CIRCLE \arrow[rrr, no head, line width = 1.8, color=red]   &  &  & \CIRCLE \\
                                                      &  &  &                                                         &                       &  &  &         \\
                                                      &  &  & \CIRCLE \arrow[rrrrddd, no head, line width = 1.8, color = red] \arrow[llluu, no head, line width = 1.8, color = blue] & \CIRCLE \arrow[rrruu, no head, color = red, line width = 1.8] &  &  &         \\
                                                      &  &  &                                                         &                       &  &  &         \\
                                                      &  &  &                                                         &                       &  &  &         \\
\CIRCLE \arrow[rrr, no head, line width = 1.8, color = blue] \arrow[rrrruuu, no head, line width = 1.8, color = blue] &  &  & \CIRCLE \arrow[rrrr, no head, line width = 1.8, color = red]                           &                       &  &  & \CIRCLE
\end{tikzcd}
}

\end{figure}
    
    Work in $M_{2n}$ and let $(a_i)_{i \in [n]}$ be the elements corresponding to $1,\dots, n$ from $K^r_{2n}$, and $(b_j)_{j \in [n]}$ be those corresponding to $n+1,\dots, 2n$. Moreover, let $\bar c_{i,j}$ be the tuples obtained by removing $a_i$ and $b_j$ from the beginning and end respectively of the graph path $S^{2n}_{i,n+j}$. It is clear that the elements $(a_i)_{i \in [n]}, (b_j)_{j \in [n]}, (\bar c_{i,j})_{i,j \in [n]}$ are pairwise distinct. Since the path type of $S^{2n}_{i,n+j}$ is equal to $\phi$ for all $i,j \in [n]$, it follows that $ M_{2n} \models \phi(a_i,b_j,\bar c_{i,j})$, for all $i,j \in [n]$.  We finally pass to the subsequence $(M_{2n})_{n \in \N}$ and relabel.
\end{proof}

Having established that we may encode the edge relation of any complete bipartite graph, we want to use monotonicity in order to encode the edge relation of arbitrary bipartite graphs, and consequently, to witness the independence property. To achieve this, we must ensure that the tuples used in the encoding are ``sufficiently disjoint'' so that the removal of the desired relations does in fact translate to the removal of an encoded edge. The following lemma is a step toward this.

\begin{lemma}{disjoint}\label{lem:disjoint}
 Let $\C$ be a class of $\L$-structures such that $\Gaif(\C)$ is somewhere dense. Then there is a path formula $\phi(\bar x,\bar y,\bar z)=\exists \bar w \psi(\bar x,\bar y,\bar z,\bar w)$ of length $\geq 2$ with parameters $\bar p$ whose joints are precisely the elements of $\bar z$, and for every $n \in \N$ there is some $M_n \in \C$ and tuples $(\bar a_i)_{i \in [n]},$ $(\bar b_j)_{j \in [n]},$ $(\bar c_{i,j})_{(i,j) \in [n]^2},$ $(\bar d_{i,j})_{i,j \in [n]^2}$ from $M_n$ such that the following hold for all $i,i',j,j' \in [n]$: 
\begin{enumerate}
    \item $M_n \models \psi(\bar a_i,\bar b_j,\bar c_{i,j},\bar d_{i,j})$;
    \item $\bar a_i(k) \neq \bar a_{i'}(k)$, for $i\neq i'$ and all $k \in [|\bar x|]$;
    \item $\bar b_j(k) \neq \bar b_{j'}(k)$, for $j\neq j'$ and all $k \in [|\bar y|]$;
    \item $\bar c_{i,j}(k)\neq \bar c_{i',j'}(k)$ and $\bar c_{i,j}(k)\neq \bar c_{i,j}(l)$, for $(i,j)\neq (i',j')$ and all $k\neq l$ from $[|\bar z|]$;
    \item $\bar d_{i,j}(k)\neq \bar d_{i',j'}(k)$, for $(i,j)\neq (i',j')$ and all $k \in [|\bar w|]$.
\end{enumerate}
\end{lemma}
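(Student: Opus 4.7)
The plan is to take the path formula supplied by Lemma \ref{lem:bipartite} and canonicalise its existential witnesses using the Bipartite Canonical Ramsey Theorem (Theorem \ref{canramsey}), thereby enlarging the free-variable tuples and extracting parameters as needed. Concretely, let $\phi_0(x,y,\bar z)=\exists \bar w\,\psi_0(x,y,\bar z,\bar w)$ together with sequences $(a_i)_{i\in [N]}$, $(b_j)_{j\in [N]}$, $(\bar c_{i,j})_{(i,j)\in [N]^2}$ in some $M_N\in\C$ be supplied by Lemma \ref{lem:bipartite} for an $N$ to be specified below, and for each pair $(i,j)$ choose a witness tuple $\bar d^0_{i,j}\in M_N^{|\bar w|}$ with $M_N\models\psi_0(a_i,b_j,\bar c_{i,j},\bar d^0_{i,j})$. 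The task is to understand, for each coordinate $k\in[|\bar w|]$, how the value $\bar d^0_{i,j}(k)$ depends on $(i,j)$, and to reshuffle variables in $\phi_0$ accordingly.

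To do this, apply the Bipartite Canonical Ramsey Theorem iteratively to the colourings $\chi_k\colon [N]\times[N]\to M_N$ given by $\chi_k(i,j)=\bar d^0_{i,j}(k)$, one coordinate at a time. Starting from $N=\Kcal^{(|\bar w|)}(n)$ (the $|\bar w|$-fold composition of $\Kcal$) and noting that canonicity survives restriction to a sub-rectangle, after $|\bar w|$ iterations one obtains $A\times B\subseteq [N]^2$ of size $n\times n$ on which every $\chi_k$ is canonical of some type $t_k\in\{1,2,3,4\}$. Since there are only $4^{|\bar w|}$ possible type vectors, a further pigeonhole over $n\in\N$ lets us assume the vector $(t_1,\dots,t_{|\bar w|})$ is independent of $n$. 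Now reorganise the witnesses according to the $t_k$: type-$1$ coordinates are constant and become parameters $\bar p$; type-$2$ coordinates depend only on $i$ and are appended to $a_i$ to form $\bar a_i$; type-$3$ coordinates depend only on $j$ and are appended to $b_j$ to form $\bar b_j$; type-$4$ coordinates remain existentially bound and form the new witness tuple $\bar d_{i,j}$. Reflecting this at the level of $\phi_0$ is a purely syntactic relabelling, so the resulting formula $\phi(\bar x,\bar y,\bar z)=\exists \bar w'\,\psi(\bar x,\bar y,\bar z,\bar w')$ with parameters $\bar p$ has canonical structure identical to $\Mcal_{\phi_0}$ as an $\L$-structure. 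In particular, $\phi$ is still a path formula of length $\geq 2$ whose joints are precisely $\bar z$.

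It remains to verify conditions (1)--(5). Condition (1) holds by construction. For (2), the first coordinate of $\bar a_i$ is $a_i$, which is pairwise distinct across $i$ by Lemma \ref{lem:bipartite}; every remaining coordinate has type $2$, whose definition directly gives $\chi_k(i,j)\neq \chi_k(i',j)$ whenever $i\neq i'$. Condition (3) is symmetric. Condition (4) is inherited from Lemma \ref{lem:bipartite}, because $\bar c_{i,j}$ consists of the internal vertices of a graph path in an $r$-subdivision of a complete graph, and distinct edges are replaced by disjoint sets of brand-new subdivision vertices, giving both within-tuple and between-tuple distinctness. Condition (5) is precisely the definition of canonical type $4$.

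I expect the main subtlety to be bookkeeping: checking that the variable reshuffling genuinely preserves the path formula structure (it does, because no atomic conjunct of $\psi_0$ is altered and the elements of $\bar z$ are never moved), and that the iterated Ramsey applications can be carried out on a sufficiently large initial rectangle so that the final rectangle has size $n$ and, after pigeonholing over $n$, the type vector---and hence $\phi$ itself---is uniform across all $n\in\N$.
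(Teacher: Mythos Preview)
Your proposal is correct and follows essentially the same approach as the paper: start from Lemma~\ref{lem:bipartite}, apply the Bipartite Canonical Ramsey Theorem coordinate-by-coordinate to the witness tuples (using the $|\bar w|$-fold iterate of $\Kcal$), pigeonhole the resulting type vector in $[4]^{|\bar w|}$ to make it uniform in $n$, and then relabel type-$1$/$2$/$3$/$4$ coordinates as parameters/$\bar x$-coordinates/$\bar y$-coordinates/remaining witnesses. Your verification of (1)--(5) matches the paper's, and your remark that the relabelling leaves $\Mcal_\phi$ unchanged as an $\L$-structure (hence $\phi$ remains a path formula with the same joints) is exactly the point.
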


\begin{proof}
    Let $\phi(x,y,\bar z)=\exists \bar w \psi(x,y,\bar z,\bar w)$ and $(M_n)_{n \in \N}$ be as in \Cref{lem:bipartite}. For clarity, we write $(a^n_i)_{i \in [n]}, (b^n_j)_{j \in [n]}, (\bar c^n_{i,j})_{(i,j) \in [n]^2}$ to denote the elements of $M_n$ from the same lemma. For each $n \in \N$, and for each pair $(i,j)\in [n]^2$, pick a tuple $\bar d^n_{i,j}$ of elements from $M_n$ consisting of some arbitrarily fixed existential witnesses to $M_n \models \phi(a^n_i,b^n_j,\bar c^n_{i,j})$, i.e.\ $M_n \models \psi(a^n_i,b^n_j, \bar c^n_{i,j},\bar d^n_{i,j})$ for all $i,j \in [n]$.

Let $m=| \bar d_{i,j} |$. By $m$ applications of \Cref{canramsey}, we may assume that whether $\bar d^n_{i,j}(k)=\bar d^n_{i',j'}(k)$ depends on one of the four canonical cases from that theorem, and not on $n$. Indeed, for every $n \in \N$ and each $k \in [m]$, define colourings $\chi_{n,k}(i,j) = \bar d^n_{i,j}(k)$ of the edges of $K_{n,n}$.  Let $\Kcal:\N\to\N$ be the computable function guaranteed by \Cref{canramsey} and write $\Kcal^m$ for the composition of $\Kcal$ with itself $m$ times. It follows that the complete bipartite graph with parts of size $\Kcal^m(n)$ contains subsets $A_n,B_n$ of the two parts of size $n$, which induce a copy of $K_{n,n}$ on which $\chi_{\Kcal^m(n),k}$ is canonical for all $k \in [m]$. We may thus restrict the argument on the subsequence $(M_{\Kcal^m(n)})_{n \in \N}$ and the elements $a^{\Kcal^m(n)}_i, b^{\Kcal^m(n)}_j, \bar c^{\Kcal^m(n)}_{i,j}, \bar d^{\Kcal^m(n)}_{i,j}$ for $i \in A_n$ and $j \in B_n$ and relabel appropriately. For every $n \in \N$, after the relabelling, we have thus obtained a tuple $\bar t_n \in [4]^m$ such that $\chi_{n,k}$ is canonical of type $\bar t_n(k)$. Since there are only finitely many such $\bar t_n$, by the pigeonhole principle we may consider a subsequence of $(M_n)_{n \in \N}$ for which $\bar t_n$ is constant and equal to some $\bar t\in [4]^m$, and relabel once more. 

We now proceed to sequentially remove elements from the tuples $\bar d^n_{i,j}$, and to either name them by a parameter, or to append them to one of $a^n_i$ or $b^n_j$. Since $\bar t$ is constant for all $n$, exactly the same process is carried out to all tuples $\bar d_{i,j}$, and so we may concurrently move the corresponding variables from $\phi$. So, if we fall into Case $1$ for some $k$, i.e. if $\bar t(k)=1$, then $\bar d_{i,j}(k)$ is the same for all $i,j$, and so we may name it by a parameter and remove it from every $\bar h_{i,j}$. If we fall into Case $2$, then $\bar d_{i,j}(k)=\bar d_{i',j'}(k)$ if, and only if, $i = i'$. Then, for every $i \in [n]$ we may remove the common element $\bar d_{i,j}(k)$ from each $\bar d_{i,j}$ and append it to $a_i$, turning it into a tuple $\bar a_i$. We then adjust $\phi$ accordingly by shifting the corresponding variable $v_k$ from $\bar v$ to $x$, which also becomes a tuple $\bar x$. Case $3$ is symmetric to Case $2$, only now we append $\bar d_{i,j}(k)$ to $\bar b_j$ and shift the variable $v_k$ to $\bar y$. We may therefore assume that we fall into Case $4$ for all the remaining $k \in [m]$. 

We argue that the resulting formula and tuples satisfy the requirements of the lemma. Clearly, $M_n \models \phi(\bar a_i,\bar b_j,\bar c_{i,j},\bar d_{i,j})$ for all $n \in \N$ and $i,j \in [n]$. Condition $2$ is also satisfied, since the original singletons $(a_i)_{i \in [n]}$ were pairwise disjoint, while for every $i \neq i'$ and $k \in [m]$ the elements $\bar d_{i,j}(k)$ and $\bar d_{i',j}(k)$, appended to $a_i$ and $a_{i'}$ respectively, come from an instance of Case $2$, and are therefore pairwise distinct. Likewise, condition $3$ is satisfied. Since we have not interfered with the tuples $\bar c_{i,j}$ in the above process and these contain pairwise distinct elements by \Cref{lem:bipartite}, Condition $4$ is also satisfied. Finally, Condition $5$ is trivially satisfied since the elements remaining in $\bar d_{i,j}$ fall into Case $4$. 
\end{proof}

With this, we can prove the main theorem of this section.

\begin{theorem}\label{prop:paths}
Let $\C$ be a monotone class of $\L$-structures such that $\Gaif(\C)$ is somewhere dense. Then there is a path formula $\phi(\bar x,\bar y)=\exists \bar w \psi(\bar x,\bar y,\bar w)$ with parameters $\bar p$ and for each bipartite graph $G=(U,V;E) \in \Bfrak$ there is some $M_G \in \C$ and sequences of tuples $(\bar a_u)_{u \in U}$ $(\bar b_v)_{v \in V},$ $(\bar h_{u,v})_{(u,v) \in E}$ from $M_G$ such that:
\begin{enumerate}
    \item $M_G \models \phi(\bar a_u, \bar b_v)$ if, and only if, $(u,v) \in E$ (so, in particular $\C$ is not NIP);
    \item If $(u,v) \in E$ then $M_G \models \psi(\bar a_u,\bar b_v, \bar h_{u,v})$;
    \item The equality type of $\bar p_{u,v}=\bar a_u^\frown \bar b_v^\frown \bar h_{u,v}$ is constant for all $(u,v) \in E(G)$;
    \item Any two tuples in $\{\bar a_u, \bar b_v, \bar h_{u,v}: u \in U, v\in V \}$ are disjoint and do not intersect the parameters $\bar p$.
\end{enumerate}
\end{theorem}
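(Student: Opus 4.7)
The plan is to use \Cref{lem:disjoint} as a starting point and then upgrade its output in two respects: first by strengthening its coordinate-wise disjointness to the full pairwise disjointness demanded by condition~(4) and making the internal equality type of an edge tuple constant as in~(3), and then by invoking the monotonicity of $\C$ to cut down from a coded $K_{n,n}$ to an arbitrary bipartite graph. \Cref{lem:disjoint} already supplies a path formula $\phi_0(\bar x, \bar y, \bar z)=\exists \bar w\,\psi_0(\bar x, \bar y, \bar z, \bar w)$ with parameters $\bar p$ and structures $M_n\in\C$ carrying tuples $(\bar a_i)$, $(\bar b_j)$, $(\bar c_{i,j})$, $(\bar d_{i,j})$ witnessing $\psi_0$ on a $K_{n,n}$ grid of indices, but the disjointness it guarantees is only coordinate-wise, and the internal equality type of $\bar a_i^\frown \bar b_j^\frown \bar c_{i,j}^\frown \bar d_{i,j}$ need not be constant in $(i,j)$.

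\textbf{Strengthening disjointness and equality type.} To remedy this, I plan to iterate the bipartite canonical Ramsey theorem (\Cref{canramsey}) coordinate by coordinate. For each ordered pair of coordinate positions across the tuples $\bar a_i$, $\bar b_j$, $\bar c_{i,j}$, $\bar d_{i,j}$, form the edge-coloring of $K_{n,n}$ assigning to the edge $(i,j)$ the corresponding element, and pass to a large sub-grid realising one of the four canonical types: type~1 lets us name the element as an additional parameter, types~2 and~3 let us absorb it into $\bar a_i$ or $\bar b_j$ respectively (exactly as in the proof of \Cref{lem:disjoint}), and type~4 directly yields the desired distinctness. Only finitely many coordinate pairs arise, so after finitely many passes all updated tuples are pairwise disjoint in the sense required by~(4). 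A final pigeonhole on the finitely many equality types of $\bar a_i^\frown \bar b_j^\frown \bar c_{i,j}^\frown \bar d_{i,j}$ makes that type constant, giving~(3); rebundling $\bar c_{i,j}^\frown \bar d_{i,j}$ into a single existentially quantified block $\bar w$ and writing $\bar h_{i,j}:=\bar c_{i,j}^\frown \bar d_{i,j}$ yields the final path formula $\phi(\bar x, \bar y)=\exists \bar w\,\psi(\bar x, \bar y, \bar w)$.

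\textbf{Monotone encoding.} For an arbitrary bipartite graph $G=(U,V;E)$, take $n\geq\max(|U|, |V|)$, identify $U, V$ with subsets of $[n]$, and let $M_G$ be the weak $\L$-substructure of $M_n$ whose domain is
\[\bar p\cup \bigcup_{u\in U}\bar a_u\cup \bigcup_{v\in V}\bar b_v\cup \bigcup_{(u,v)\in E}\bar h_{u,v}\]
and whose interpretation of each $R\in\L$ consists only of the atomic facts needed to witness $\psi(\bar a_u, \bar b_v, \bar h_{u,v})$ for each $(u,v)\in E$, together with the fixed parameter facts. Monotonicity of $\C$ ensures $M_G\in\C$; conditions~(2)--(4) are then immediate from the construction and the previous step, and the easy direction of~(1) is immediate from~(2).

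\textbf{Main obstacle.} The principal difficulty is the converse of~(1): that $(u,v)\notin E$ forces $M_G\not\models \phi(\bar a_u, \bar b_v)$. Here I plan to exploit the path structure of $\Mcal_\phi$ together with the strong disjointness from condition~(4). A putative witness $\bar h$ yields a homomorphism $(\Mcal_\phi, \bar x, \bar y)\to (M_G, \bar a_u, \bar b_v)$ which maps the successive steps of the path to atomic facts in $M_G$ joined at the correct joint elements. Every atomic fact present in $M_G$ is a step of some edge path $(u',v')\in E$, and by~(4) two distinct edge paths share no element except possibly through the ``endpoint'' tuples $\bar a_{u'}$ or $\bar b_{v'}$ — an internal joint of one $\bar h_{u',v'}$ cannot coincide with one of another. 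Tracing the image of the path therefore forces it to lie inside a single edge path, whose endpoints must be $\bar a_u$ and $\bar b_v$, so $(u,v)\in E$. The delicate part of this verification — which I expect to absorb most of the effort — is the careful bookkeeping of how consecutive steps of $\Mcal_\phi$ identify through shared joints, and the handling of possible symmetries of $\phi$ that could allow the homomorphism to traverse a given edge path in either direction; the disjointness provided by Step~2 is exactly what closes off any attempt to splice together fragments of different edge paths.
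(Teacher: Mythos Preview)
Your overall architecture is right, but the disjointness step has a genuine gap, and the paper fills it with a different device than the one you propose.

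The iterated canonical Ramsey you describe, applied coordinate by coordinate with the colouring $\chi_k(i,j) = \bar p_{i,j}(k)$, only controls coincidences at the \emph{same} position: it tells you about $\bar p_{i,j}(k) = \bar p_{i',j'}(k)$. But this is exactly what \Cref{lem:disjoint} already guarantees, so the passes are redundant; more to the point, they say nothing about cross-position coincidences $\bar p_{i,j}(k) = \bar p_{i',j'}(l)$ with $k\neq l$, and that is precisely what condition~(4) demands (nothing so far excludes e.g.\ $\bar a_i(1)=\bar a_{i'}(2)$ for $i\neq i'$, or $\bar a_i(1)=\bar h_{i',j'}(3)$). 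Your phrase ``ordered pair of coordinate positions'' suggests you felt this, but the colouring you actually specify assigns a single element to each edge and cannot detect cross-position equality. Also, making the internal equality type of $\bar p_{i,j}$ constant over a product set $A\times B$ needs bipartite Ramsey (\Cref{th:partramsey}), not plain pigeonhole.

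The paper's route is different. From the output of \Cref{lem:disjoint} it first observes that the coordinate-wise distinctness there forces each $\bar p_{i,j}$ to intersect non-trivially with at most $q=|\bar p_{i,j}|$ others, and then runs a short greedy/counting argument (with $f(n)=q(n-1)^2+n$) to extract an index set on which all pairs \emph{intersect trivially}: $\bar p_{i,j}\cap\bar p_{i',j'}$ is exactly $\bar a_i$, $\bar b_j$, or $\emptyset$ according to the index pattern. Only then is \Cref{th:partramsey} applied to make the equality type of $\bar p_{i,j}$ constant. Full disjointness now follows from the three ingredients together: e.g.\ if $\bar a_i\cap\bar b_j\neq\emptyset$, constant equality type forces $\bar b_j(k)=\bar b_{j'}(k)$ for all $j'$, contradicting \Cref{lem:disjoint}; and cross-index coincidences are already excluded by trivial intersection. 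This counting step is the missing idea in your plan.

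Your homomorphism-tracing plan for the non-edge direction is correct in spirit and is essentially what the paper does, but the paper isolates one simplifying observation that you omit: because \Cref{lem:disjoint} produces a path of length $\geq 2$, every step of $\Mcal_\phi$ contains a joint (a $\bar z$-variable), hence every relation retained in the weak substructure $M_G$ contains some element of $\bar c_{u,v}$ with $(u,v)\in E$. Combined with the pairwise disjointness of the $\bar h_{u,v}$, this is what pins each image step to a single edge path and makes the trace short; the paper then dispatches the converse of~(1) in two lines rather than treating it as the principal difficulty.
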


\begin{proof}

   Let $\phi(x,y,\bar z)=\exists \bar w \psi(x,y,\bar z,\bar w)$, with parameters $\bar p$, and $(M_n)_{n \in \N}$ be as in \Cref{lem:disjoint}. For clarity, we again write $(a^n_i)_{i \in [n]}, (b^n_j)_{j \in [n]}, (\bar c^n_{i,j})_{(i,j) \in [n]^2}$ to denote the elements from that lemma coming from $M_n$. Consider the tuples $\bar p^n_{i,j}=\bar a^n_i\ ^\frown \bar b^n_j \ ^\frown \bar c^n_{i,j}\ ^\frown \bar d^n_{i,j}$, and let $q=|\bar p^n_{i,j}|$. Observe that for every $n \in \N$, at most $q\cdot |\bar p|$ many tuples $\bar p^n_{i,j}$ intersect the parameters $\bar p$ because of the conditions in Lemma \ref{lem:disjoint}. By working with suitably large $n$ and avoiding these tuples, we may relabel so that no $\bar p^n_{i,j}$ intersects $\bar p$. 

    For $i,j,k,l \in [n]$, we say that the tuples $\bar p^n_{i,j}$ and $\bar p^n_{k,l}$ \emph{intersect trivially} whenever
 \begin{center}$
   \bar p_{i,j}\cap \bar p_{k,l} = 
    \begin{cases}
        \bar p_{i,j}, &\quad \text{if } i=k \land j=l \\
       \bar a_i, &\quad \text{if } i=k \land j\neq l  \\
       \bar b_j, &\quad \text{if } i\neq k \land j = l \\
       \emptyset,&\quad \text{otherwise. }  \\
       
     \end{cases}$
     \end{center}

    Letting $f(n)=q \cdot (n-1)^2 + n$, we claim that for all $n \in \N$ and all $m \geq f(n)$ we may find a set $A_n\subseteq [f(n)]$ of size $n$ so that $\bar p^m_{i,j}$ and $\bar p^m_{k,l}$ intersect trivially for all $i,j,k,l \in A_n$.

    We show this by induction. Indeed, for $n=1$ this is trivially true as $A_1=[1]$ works for all $m \geq 1$. Suppose that the claim holds for $n-1$ and fix $m \geq f(n)$. Since $f(n)\geq f(n-1)$, by the induction hypothesis there is some $A_{n-1} \subseteq [f(n-1)]\subseteq [f(n)]$ of size $n-1$ so that $\bar p^m_{i,j}$ and $\bar p^m_{k,l}$ intersect trivially for all $i,j,k,l \in A_{n-1}$.
    Notice, that because of Lemma \ref{lem:disjoint}, for every fixed $\bar p_{i,j}$, there are at most $q$ tuples $\bar p_{k,l}$ that do not intersect trivially with it. Hence, there are at most $q \cdot (n-1)^2$ elements $l \in [f(n)]$ such that $\bar p^m_{i,j}$ and $\bar p^m_{k,l}$ do not intersect trivially for all $i,j,k \in A_{n-1}$. Since $[f(n)]$ contains an additional $n$ elements, we are guaranteed to find some $l \in [f(n)]$, which is not one of the $n-1$ elements of $A_{n-1}$, such that $\bar p^m_{i,j}$ and $\bar p^m_{k,l}$ intersect trivially for all $i,j,k \in A_{n-1}$. We may therefore let $A_n=A_{n-1}\cup\{l\}$. 

    Hence,  we may consider the subsequence $(M_{f(n)})_{n \in \N}$ and relabel the tuples appropriately, so that all tuples $\bar p^n_{i,j}, \bar p^n_{k,l}$ intersect trivially for all $n \in \N$ and $i,j,k,l \in [n]$. Furthermore, by an application of \Cref{th:partramsey}, we may assume that the tuples $\bar p^n_{i,j}$ have the same equality type for all $i,j \in [n]$ and all $n \in \N$. More precisely, for every pair $(i,j)\in [n]^2$ let $\Delta_n(i,j):=\Delta_=(\bar p^n_{i,j})$. Letting $q=|\bar p_{i,j}|$, it is easy to see that there are at most $p=2^{q^2}$ sets $\Delta_n(i,j)$. It follows by Theorem \ref{th:partramsey}, that there are subsets $A,B$ of $[\Pcal(n,2,p)]$ of size $n$ such that $\Delta_{\Pcal(n,2,p)}(i,j)$ is constant for all $i \in A$, $j \in B$. Hence, we may relabel appropriately so that $\Delta_n(i,j)$ is constant for all $i,j \in [n]$. Since there are only finitely many such sets, the pigeonhole principle implies that we may prune the sequence $(M_n)_{n \in \N}$ so that $\Delta_n(i,j)$ is uniformly constant for all $n \in \N$.

   It follows that no tuple $\bar a^n_i$ can intersect a tuple $\bar b^n_j$. Indeed, since the equality types are constant, and in particular $\Delta_n(i,j)=\Delta_n(i,j')$, if $\bar a^n_i$ and $\bar b^n_j$ had an element in common then $\bar b^n_j(k)=\bar b^n_{j'}(k)$ for some $k$ and all $j'\neq j$, contradicting the assumptions of Lemma \ref{lem:disjoint}. Likewise, no tuple $\bar h^n_{i,j}=\bar c^n_{i,j}\ ^\frown \bar d^n_{i,j}$ can intersect the tuples $\bar a_i$ or $\bar b_j$. Since the tuples $\bar p^n_{i,j}$ intersect trivially, this implies that any two tuples $\{\bar a^n_i, \bar b^n_j, \bar h^n_{i,j}:i,j \in A_n\}$ are pairwise disjoint, and furthermore do not intersect the parameters $\bar p$.

    For every $n \in \N$, consider the weak substructure $M'_n \leq M_n$ consisting of the elements in $\bar p^n_{i,j}$ and the parameters $\bar p$, and containing solely the relations necessary to witness $M_n\models \psi(\bar a^n_i,\bar b^n_j,\bar h^n_{i,j})$. By monotonicity, $M'_n \in \C$. Notice that every tuple appearing in a relation of $M'_n$ contains at least one element of $\bar c^n_{i,j}$ for some $i,j \in [n]$. Indeed, the elements of $\bar c_{i,j}$ correspond precisely to the joints of the paths $\phi(\bar a_i,\bar b_j)$, and since $\Mcal_\phi$ has length $\geq 2$ every path has at least one joint.
    
    Finally, given $G=(U,V;E)$ with $U=V=[n]$, let $M_G \in \C$ be the induced substructure of $M'_n$ obtained by removing $\bar h^n_{i,j}$ for all $(i,j) \not\in E$. Since the tuples in $\{ \bar a^n_i,\bar b^n_j,\bar h^n_{i,j} : i,j \in [n] \}$ are pairwise disjoint, it follows that $\bar h^n_{i,j}\in M_G$ for $(i,j) \in E(G)$. Hence, letting $\phi'(\bar x, \bar y)=\exists \bar z \phi(\bar x,\bar y,\bar z)$, we see that $M_G \models \phi'(\bar a^n_i,\bar b^n_j)$ for all $(i,j) \in E(G)$. Moreover, $M_G \models \neg\phi(\bar a_i,\bar b_j)$ for $(i,j) \notin E(G)$. Indeed, since the elements of $\bar c^n_{i,j}$ are not in $M_G$ for $(i,j) \notin E(G)$, the above observation implies that $M_G \models \neg\phi(\bar a_i,\bar b_j)$. 
\end{proof}

    Note that all of the above can be proved by working with an appropriate infinite model of $\Th(\C)$ obtained by compactness, and applying the infinite versions of the different Ramsey theorems. We have chosen to give a finitistic proof, which is admittedly more involved, so that everything is carried out effectively. Therefore, if we assume that the \emph{VC-dimension} of formulas in the class is computable, we may compute given $r$ the maximum size of an $r$-subdivided clique occurring in the Gaifman graph of a structure in $\C$. 

\begin{definition}
We say that a class $\C$ of structures is effectively NIP if there is a computable function $f:\N \to \N$ such that for all formulas $\phi(\bar x,\bar y)$ and all structures $M \in \C$ there is no $n > f(|\phi|)$ and $(\bar a_i)_{i \in [n]}, (\bar b_J)_{J \subseteq [n]}$ with 
\[M \models \phi(\bar a_i,\bar b_J) \iff i \in J.\]
\end{definition}

Recall that we call a class $\C$ of graphs \emph{effectively nowhere dense} whenever there is a computable function $f:\N \to \N$ such that for all $r \in \N$ and for all $G \in \C$ we have that $K^r_{f(r)}$ is not a subgraph of $G$. 

\begin{corollary}
Let $\C$ be a monotone and (monadically) NIP class of $\L$-structures in a finite relational language. Then $\Gaif(\C)$ is nowhere dense. Moreover, if $\C$ is effectively NIP then $\Gaif(\C)$ is effectively nowhere dense. 
\end{corollary}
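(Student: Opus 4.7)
The plan is to derive both assertions from Theorem~\ref{prop:paths} by contraposition, with the \emph{moreover} clause requiring only a quantitative unwinding of its proof.

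For the first assertion, suppose $\Gaif(\C)$ is not nowhere dense. Theorem~\ref{prop:paths} then produces a path formula $\phi(\bar x,\bar y)$ with parameters and, for each bipartite graph $G=(U,V;E)$, a structure $M_G\in\C$ realising $E$ via $\phi$ on pairwise disjoint tuples. Taking $G$ to have left part $[n]$, right part indexed by the subsets $J\subseteq[n]$, and the membership edge relation, exhibits $\phi$ as having the independence property in $\C$, contradicting NIP (and \emph{a fortiori} monadic NIP).

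For the moreover clause, assume $\C$ is effectively NIP with computable witness $e\colon\N\to\N$; the aim is to exhibit a computable $g\colon\N\to\N$ such that $K^r_{g(r)}$ is not a subgraph of $\Gaif(M)$ for any $M\in\C$ and any $r$. Fix $r$ and let $q$ be the maximum arity in $\L$. Inspection of Lemmas~\ref{lem:bipartite} and~\ref{lem:disjoint} shows that the path formula $\phi$ produced starting from an $r$-subdivided clique has length $r+1$ and at most $(r+1)q$ existentially quantified witness variables, hence $|\phi|$ is bounded by a computable function $L(r)$ of $r$ and the signature of $\L$. To reach a contradiction with effective NIP it therefore suffices to extract the configuration of Theorem~\ref{prop:paths} with $|U|=m:=e(L(r))+1$ and $V=\mathcal{P}(U)$. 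The last step is to invert the Ramsey reductions: starting from a structure with $K^r_N$ in its Gaifman graph, Lemma~\ref{lem:bipartite} invokes $\Rcal$ with palette of size at most $(|\L|\,q!)^{r+1}$; Lemma~\ref{lem:disjoint} iterates $\Kcal$ at most $(r+1)q$ times, followed by a pigeonhole over $[4]^{(r+1)q}$; and Theorem~\ref{prop:paths} applies the recursion $\widetilde{f}(n)=q(n-1)^2+n$ and the bipartite Ramsey function $\Pcal$ with palette of computable size (bounded by $2^{|\bar p|^2}$ for $|\bar p|$ computable from $r$ and $\L$). Each reduction shrinks $n$ by a computable function, and each is applied a number of times bounded computably in $r$ and $|\L|$, so composing and inverting them produces the required $g$.

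The main (mild) obstacle is purely bookkeeping: one must verify that every parameter appearing in the three proofs depends computably on $r$, $\L$, and $e$, and not on any structure-specific quantity. This follows routinely from inspection, since all the choices made are either applications of the computable Ramsey functions $\Rcal$, $\Kcal$, $\Pcal$ or pigeonholes over finite sets of computable size.
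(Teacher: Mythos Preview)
Your proposal is correct and follows exactly the approach the paper intends: the corollary is stated without explicit proof, the preceding remark indicating that the finitistic proofs of Lemma~\ref{lem:bipartite}, Lemma~\ref{lem:disjoint} and Theorem~\ref{prop:paths} were deliberately given effectively precisely so that this quantitative unwinding goes through. Your bookkeeping is essentially accurate; the only harmless slips are that the pigeonhole steps over path types and over $[4]^m$ are taken \emph{across} the sequence $(M_n)$ and hence simply disappear when working from a single structure, and that the $q$ in the recursion $f(n)=q(n-1)^2+n$ denotes the tuple length $|\bar p_{i,j}|$ rather than the maximal arity---but both quantities are computable from $r$ and $\L$, so the conclusion is unaffected.
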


\section{Intractability}  \label{sec:tractable}

In this section, we prove that any monotone class of relational structures whose Gaifman class is somewhere dense polynomially interprets the class of all bipartite graphs, and is therefore intractable. Towards this, we first strengthen Theorem \ref{prop:paths} to obtain a simple path formula $\phi$ as well as a computable function $\Phi:\Bfrak \to \C$ such that $\phi$ codes the edge relation of $G$ in $\Phi(G)$.

\begin{lemma}{simplepath}\label{cl:simplepath}
Let $\C$ be a monotone class of $\L$-structures such that $\Gaif(\C)$ is somewhere dense. Then there is a simple path formula $\phi(\bar x,\bar y)$ with parameters $\bar p$ and a polynomial time computable function $\Phi:\Bfrak \to \C$, such that for each bipartite graph $G=(U,V;E) \in \Bfrak$ there are tuples $(\bar a_u)_{u \in U}$ $(\bar b_v)_{v \in V},$ $(\bar h_{u,v})_{(u,v) \in E}$ from $\Phi(G)$ satisfying:
\[\Phi(G) \models \phi(\bar a_u, \bar b_v) \text{ if, and only if, }(u,v) \in E.\]
Furthermore, the interpretation of the parameters $\bar p$ in $\Phi(G)$ can be computed in constant time from $G \in \Bfrak$.
\end{lemma}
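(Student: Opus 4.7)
The plan is to upgrade Theorem \ref{prop:paths} in two respects: convert the path formula it provides into a \emph{simple} one, and render the associated construction $G \mapsto M_G$ polynomial-time computable.

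First, I apply Theorem \ref{prop:paths} to obtain a path formula $\phi_0(\bar x, \bar y) = \exists \bar w\, \psi(\bar x, \bar y, \bar w)$ with parameters $\bar p$, together with, for each $G \in \Bfrak$, a structure $M_G \in \C$ and tuples $\bar a_u, \bar b_v, \bar h_{u,v}$ witnessing the edge relation. By Definition \ref{def:path}, some $x_0 \in \bar x$ lies in $S(\Mcal_{\phi_0})$ and some $y_0 \in \bar y$ lies in $F(\Mcal_{\phi_0})$; crucially, by the construction in Lemma \ref{lem:disjoint}, the remaining entries of $\bar x, \bar y$ are neither joints nor endpoints (joints are precisely the variables of the tuple $\bar z$, which $\phi_0$ has already closed existentially). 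I then define
\[
\phi(x_0, y_0) := \exists (\bar x \setminus \{x_0\})\, \exists (\bar y \setminus \{y_0\})\, \exists \bar w\, \psi(\bar x, \bar y, \bar w).
\]
As an $\L$-structure, $\Mcal_\phi$ coincides with $\Mcal_{\phi_0}$, hence is still a path, but its only free variables $\{x_0\}, \{y_0\}$ trivially satisfy $\{x_0\} \subseteq S(\Mcal_\phi)$ and $\{y_0\} \subseteq F(\Mcal_\phi)$, so $\phi$ is a simple path formula.

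Next, I set $\Phi(G) := M_G$ and redefine $\bar a_u, \bar b_v$ as the singleton projections at the $x_0, y_0$-coordinates, absorbing the stripped entries into a new $\bar h_{u,v}$ alongside the original existential witnesses. The forward direction of the edge-coding equivalence is immediate from Theorem \ref{prop:paths}(1)--(2). For the converse, I argue that when $(u, v) \notin E$, no homomorphism $h : \Mcal_\phi \to \Phi(G)$ with $h(x_0) = \bar a_u, h(y_0) = \bar b_v$ can exist. Any such $h$ sends each step $\bar e_i$ of the canonical path to an instance of the corresponding relation in $\Phi(G)$, which necessarily arises as the $i$-th step of some encoded edge of $G$. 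Consecutive steps share a joint vertex, and in $\Phi(G)$ the joint tuples $\bar c_{u', v'}$ of distinct encoded edges are pairwise disjoint by Theorem \ref{prop:paths}(4); hence every step must lie inside a \emph{single} encoded edge. The endpoint constraints then force that edge to be the one encoding $(u, v)$, contradicting $(u, v) \notin E$.

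Finally, to make $\Phi$ polynomial-time computable, I build $\Phi(G)$ directly rather than extracting it from the huge ambient $M_n$ produced in the proof of Theorem \ref{prop:paths}. Given $G = (U, V; E)$, I instantiate for each $(u, v) \in E$ a fresh copy of the witnessing path structure (with the interpretation of $\bar p$ shared throughout), gluing the $x_0$-coordinate across all copies incident to $u$ to form $\bar a_u$, and analogously gluing the $y_0$-coordinate across all copies incident to $v$ to form $\bar b_v$. The resulting $\Phi(G)$ lies in $\C$ by monotonicity, has size $O(|E|)$, is built in polynomial time, and the tuple $\bar p$ of parameters is assigned a fixed set of constants whose interpretation is manifestly independent of $G$, hence computable in constant time.

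The main technical obstacle is the spurious-witness argument in the second paragraph: existentially closing the extra entries of $\bar x, \bar y$ must not create new satisfying assignments over non-edges. The proof relies on combining the joint-disjointness from Theorem \ref{prop:paths}(4) with the observation that the extra entries of $\bar x, \bar y$ introduced by Lemma \ref{lem:disjoint} do not occupy joint positions in the canonical path; together these force every witness homomorphism to factor through a single encoded edge, exactly as required.
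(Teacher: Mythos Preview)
Your approach differs from the paper's in a substantive way. The paper does not existentially close the extra entries of $\bar x,\bar y$; instead it restricts to a \emph{subpath} $\Mcal' = \Mcal_\phi[\bar e_i,\dots,\bar e_j]$ chosen so that the only $\bar x$-variables present lie in $\bar e_i$ and the only $\bar y$-variables lie in $\bar e_j$, and it keeps those variables free. Your route is in principle viable, but as written it has two gaps.

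First, the construction of $\Phi(G)$. You glue fresh path copies only along the $x_0$- and $y_0$-coordinates (and $\bar p$). But in the structures $M_G$ supplied by Theorem~\ref{prop:paths}, two encoded edges incident to the same $u$ share the \emph{entire} tuple $\bar a_u$, not just its $x_0$-coordinate. When $|\bar x|>1$ your glued structure is therefore not a weak substructure of any $M_G$ (it has strictly more elements in the $\bar x\setminus\{x_0\}$ positions), and ``by monotonicity'' does not place it in $\C$. The fix is to glue along all of $\bar a_u,\bar b_v$ using the constant equality type from Theorem~\ref{prop:paths}(3); but then $\Phi(G)$ coincides with $M_G$, and your no-spurious-witness argument must be carried out there.

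Second, the spurious-witness argument itself. You assert that a homomorphism $h:\Mcal_\phi\to\Phi(G)$ must send $\bar e_i$ to ``the $i$-th step of some encoded edge''. This is not justified: if the same relation symbol occurs at several steps $R_{k_1}=R_{k_2}=\dots$ of the path, then $h(\bar e_i)$ could a priori land on step $k_\ell$ of an encoded edge for any $\ell$, and the joint of $\bar e_i$ need not land on a $\bar c$-element at all. Disjointness of the $\bar c_{u',v'}$ alone does not rule this out; one needs an inductive argument tracking positions within each step and invoking the full disjointness of $\bar a,\bar b,\bar h,\bar p$ from Theorem~\ref{prop:paths}(4) together with condition~(4) of Lemma~\ref{lem:disjoint}. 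This argument is delicate precisely because, once you close the extra $\bar x,\bar y$ variables existentially, elements such as $\bar a_u(1)$ (shared across all edges from $u$) become available as witnesses and can serve as ``bridges'' between encoded edges. The paper's subpath restriction sidesteps this: it keeps $\bar x',\bar y'$ free, confines them to the endpoints of the subpath, and then passes to an induced substructure $M'_G$ containing only the relevant subtuples, so that the converse direction inherits more directly from Theorem~\ref{prop:paths}.
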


\begin{proof}
Let $\phi$ and $(M_G)_{G \in \Bfrak}$ be as in Theorem \ref{prop:paths}. Consider the path $\Mcal_\phi$. Observe that either there is a step $\bar e_i$ such that both $\bar e_i \cap \bar x = \bar x'\neq \emptyset$ and $\bar e_i \cap \bar y = \bar y' \neq \emptyset$, or there are $i<j$ and steps $\bar e_i$, $\bar e_j$ such that $\bar e_i \cap \bar y =\emptyset, \bar e_j \cap \bar x= \emptyset$ and $\bar e_i \cap \bar x = \bar x' \neq \emptyset, \bar e_j \cap \bar y = \bar y' \neq \emptyset$ and for all $k\in\{i+1,\dots,j-1\}$ we have that $\bar e_k\cap \bar x = \bar e_k\cap\bar y = \emptyset$. Consider the induced substructure $\Mcal'$ of $\Mcal_\phi$ consisting solely of the step $\bar e_i$ in the first case or the steps $\bar e_i, \dots, \bar e_j$ in the second, and let $\phi'(\bar x',\bar y')=\exists \bar w' \psi'(\bar x',\bar y', \bar w')$ be the canonical formula of $(\Mcal',\bar x',\bar y')$. Clearly, $\phi'$ is a simple path formula, and it follows by construction that for each $G \in \Bfrak$ we may pick minimal subtuples $\bar a'_u \subseteq \bar a_u, \bar b'_v \subseteq \bar b_v, \bar c'_{u,v} \subseteq \bar c_{u,v}\bar h'_{u,v} \subseteq \bar h_{u,v} \in M_G$ for all $u \in U, v \in V$ such that :
\begin{itemize}
    \item     $M_G \models \phi'(\bar a'_u,\bar b'_v)$ if, and only if, $(u,v) \in E$, and
    \item     $(u,v) \in E$ implies $M_G \models \phi'(\bar a'_u,\bar b'_v,\bar c'_{u,v},\bar h'_{u,v}).$
\end{itemize}
Clearly, these new subtuples are mutually disjoint and do not intersect any of the parameters $\bar p'\subseteq \bar p$ that appear in $\phi'$. We finally let $M'_G$ be the induced substructure of $M_G$ consisting solely of these subtuples. Since the equality type of all tuples $\bar p'_{u,v}=\bar a_u^\frown \bar b_v ^\frown \bar h_{u,v}$ is uniformly constant by Theorem \ref{prop:paths}, it follows that $M_G$ may be computed from $G=(U,V;E)$ by adding disjoint tuples $(\bar a^G_u)_{u \in U}, (\bar b^G_v)_{v \in V}, (\bar h^G_{u,v})_{(u,v)\in E(G)}, \bar p^G$ of appropriate equality types to represent vertices and existential witnesses, and the relations specified by $\phi'$ to represent the edges.  Clearly, the tuple $\bar p^G$ which interprets the parameters of $\phi'$ is obtained in constant time from $G$.
\end{proof}

\begin{theorem}\label{thm:intractability}
Let $\C$ be a monotone class of $\L$-structures such that $\Gaif(\C)$ is somewhere dense, and assume that $\mathsf{AW}[*]\neq \FPT$. Then $\FO$ model-checking on $\C$ is not fixed-parameter tractable. 
\end{theorem}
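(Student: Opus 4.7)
The plan is to establish $\Bfrak \leq_\Psf \C$ and then apply Lemma~\ref{lem:simple-interpretations-params}. Since FO model-checking on the class of all finite graphs is $\mathsf{AW}[*]$-complete, and this hardness transfers to $\Bfrak$ via a standard incidence-style polynomial-time interpretation (represent a graph by its vertex-edge incidence bipartite graph), the assumption $\mathsf{AW}[*]\neq\FPT$ then forces $\C$ not to admit fixed-parameter tractable model-checking.

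To construct the polynomial interpretation, I first invoke Lemma~\ref{cl:simplepath} to obtain a simple path formula $\phi(\bar x, \bar y)$ with parameters $\bar p$ and a polynomial-time computable map $\Phi:\Bfrak \to \C$ such that $\Phi(G) \models \phi(\bar a_u, \bar b_v)$ if and only if $(u,v) \in E(G)$. Writing $m=|\bar x|$ and $n=|\bar y|$, the main technical task is to encode both $U$- and $V$-vertices as tuples of a common arity, since the definition of a simple interpretation demands a uniform arity for domain elements. I would modify $\Phi$ to $\Phi'$ by adjoining two fresh flag elements $p_U\neq p_V$ to each $\Phi(G)$; this can be done in constant time and preserves membership in $\C$ (no new relations are added, so monotonicity is respected). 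Setting $k = m+n+1$, the simple interpretation $I : \Cfrak(\L) \to \Cfrak(\{E,U,V\})$ would use the parameter tuple $\bar v = (\bar p, p_U, p_V)$: its domain formula $\delta(\bar z, \bar v)$ picks out $k$-tuples whose last coordinate is either $p_U$ or $p_V$ and whose remaining coordinates form a valid vertex representative (detectable by existentially quantifying over the other half of $\phi$, namely $\exists \bar y'\,\phi(\bar a, \bar y')$ for a $U$-candidate $\bar a$, and symmetrically for $V$-candidates). The formulas for $U$ and $V$ read off the flag coordinate, and
\[E(\bar z_1, \bar z_2) \equiv U(\bar z_1) \land V(\bar z_2) \land \phi(\pi_{\bar x}\bar z_1, \pi_{\bar y}\bar z_2),\]
where $\pi_{\bar x}, \pi_{\bar y}$ are the projections onto the $\bar x$- and $\bar y$-coordinates and the parameters $\bar p$ of $\phi$ are substituted from $\bar v$. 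By Lemma~\ref{cl:simplepath} the equality $I(\Phi'(G)) = G$ holds (up to the canonical identification of each vertex with its unique representative), which gives $\Bfrak \leq_\Psf \C$ and finishes the proof via Lemma~\ref{lem:simple-interpretations-params}.

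The main obstacle I anticipate is arranging $\delta$ to pick out exactly the vertex-representative tuples and nothing else; this requires a little care for isolated vertices, which can be handled either by restricting the reduction to bipartite graphs without isolated vertices (the $\mathsf{AW}[*]$-hardness is preserved) or by adjoining dummy edges to $\Phi'(G)$ before applying the interpretation. A subtlety worth noting is the symmetric path case of Definition~\ref{def:symmetric}: although $\phi$ may not internally distinguish its two argument tuples, the external flag coordinates in the interpretation distinguish $U$ from $V$, so the interpreted edge relation unambiguously recovers the directed incidence $U\times V$ of $G$.
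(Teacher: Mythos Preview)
Your overall strategy---polynomially interpret $\Bfrak$ in $\C$ via Lemma~\ref{cl:simplepath} and then invoke Lemma~\ref{lem:simple-interpretations-params}---matches the paper's, but the implementation contains a genuine error.

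The claim ``adjoining two fresh flag elements $p_U\neq p_V$ \ldots\ preserves membership in $\C$ (no new relations are added, so monotonicity is respected)'' is wrong: monotonicity in this paper is closure under \emph{weak substructures}, i.e.\ under deleting elements and relation tuples. It says nothing about enlarging a structure by isolated points, so there is no reason $\Phi'(G)$ lies in $\C$, and your reduction does not land in the target class. A secondary problem is the domain formula. Even granting the flags, $\exists \bar y'\,\phi(\bar a,\bar y')$ is not shown to isolate precisely the tuples $\bar a_u$: tuples assembled from internal path elements of one or several witness paths may also satisfy it, and your $k$-tuples carry unspecified filler coordinates, so each vertex acquires many representatives. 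Simple interpretations in the sense of Definition~\ref{def:simple-interpretation} do not come with a quotient, so ``up to canonical identification'' is not available to you.

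The paper avoids both issues by preprocessing on the $\Bfrak$ side rather than tampering with $\Phi(G)$. It first de-symmetrises $\phi$ if needed, then applies a polynomial-time map $f:\Bfrak\to\Bfrak$ that attaches $k{+}1$ fresh pendant neighbours to every vertex of $G$ (where $k$ is the length of $\Mcal_\phi$), and works with $\Phi\circ f(G)\in\C$. The domain is then defined by the counting formula $\theta_U(\bar x)\equiv\exists^{>k}\bar y\,\phi(\bar x,\bar y)\wedge \bar x\neq\bar p$ (and symmetrically $\theta_V$, padded so that $|\bar x|=|\bar y|$): the disjointness properties from Theorem~\ref{prop:paths} bound the number of $\phi$-neighbours of any non-vertex tuple by $k$, while the added pendants push every genuine $\bar a_u$, $\bar b_v$ above $k$. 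This stays inside $\C$, singles out exactly the vertex tuples with a unique representative each, and needs no external flag elements.
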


\begin{proof}
Let $\C$ satisfy the above, and assume that $\mathsf{AW}[*]\neq \FPT$. We argue that we may polynomially interpret the class of all bipartite graphs in $\C$. 

Let $\phi(\bar x,\bar y)$ be the simple path formula from \Cref{cl:simplepath}. Without loss of generality, we may assume that $\phi$ is not symmetric (in the sense of Definition \ref{def:symmetric}). Indeed, if $\phi$ is symmetric let $\sigma \in S_n$ be the non-identity permutation from Definition \ref{def:symmetric}, and consider the formula $\phi'(\bar x,\bar y)=\phi(\bar x,\sigma^{-1}(\bar y))$, where $\sigma^{-1}$ is applied to the indices of $\bar y$. Clearly, $\phi'$ is no longer symmetric, while the tuples $(\bar a_u)_{u \in U}$ $(\sigma(\bar b_v))_{v \in V},$ $(\bar h_{u,v})_{(u,v) \in E}$ still satisfy the conditions in Lemma \ref{cl:simplepath}. 

Now, let $k$ the length of the path $\Mcal_\phi$ and define the auxiliary map:
\[\begin{aligned}
f: \Bfrak&\to\Bfrak\\
G=(U,V;E)&\mapsto (U^\prime, V^\prime; E^\prime),    
\end{aligned}\]
where $U^\prime := U \sqcup \{\dot{u}_{v,1},\dots,\dot{u}_{v,k+1}:v\in V\}$, $V^\prime := V \sqcup \{\dot{v}_{u,1},\dots,\dot{v}_{u,k+1}:u\in U\}$, and $E^\prime := E\sqcup\{(u,\dot{v}_{u,i}):u\in U,i\in[k+1]\}\sqcup\{(v,\dot{u}_{v,i}):v\in V,i\in[k+1]\}$.
    
This is clearly computable in polynomial time. Given $G=(U,V;E) \in \Bfrak$, consider $\Phi \circ f(G) \in \C$ given from \Cref{prop:paths}, and let:
\begin{center}
    $\theta_U(\bar x) := \exists^{>k} \bar y \phi(\bar x,\bar y) \land \bar x \neq \bar p \text{ and } \theta_V(\bar y) := \exists^{>k} \bar x \phi(\bar x,\bar y) \land \bar y \neq \bar p,  $
\end{center}

where $\bar p$ are the parameters of $\phi$. Without loss of generality, we may assume that $|\bar x|=|\bar y|$, for if $m=|\bar y| < |\bar x|=n$, then we may take $\theta_V(\bar y,y_{m+1},\dots,y_n)$ to be $\theta_V(\bar y)\land \bigwedge_{i = m}^{n-1} (y_i = y_{i+1})$, and similarly if $|\bar x|<|\bar y|$. So, let $\theta(\bar x) = \theta_V(\bar x) \lor \theta_U(\bar x)$.

Observe that $G$ is an induced subgraph of $f(G)$, so we may view $\Phi(G)$ as an induced substructure of $\Phi\circ f(G)$. Letting $\bar p_{u,v}=\bar a_u ^\frown \bar b_v ^\frown\bar h_{u,v}$, it holds that $\bar p_{u,v}\cap\bar p =\emptyset$ and 
\begin{center}
    $\bar p_{u,v} \cap \bar p_{u',v'} = 
    \begin{cases}
        \bar a_{u} & \text{ if } u = u^\prime; \\
        \bar b_{v} & \text{ if } v = v^\prime; \\
        \emptyset  & \text{ otherwise}.
    \end{cases}$
\end{center}

whenever $(u,v)\neq (u^\prime,v^\prime)$. Hence, the only non-parameter elements that appear more than $k$ times within a path are those in the tuples $\bar a_u$ and $\bar b_v$ for $u \in U$ and $v \in V$, i.e.\ those tuples corresponding to the elements of $G$. Since $\phi$ is not symmetric, it follows that $\theta(\Phi \circ f(G))=\{\bar a_u, \bar b_v : u \in U, v \in V\}$, and so the pair $I=(\theta(\bar x),\phi(\bar x,\bar y))$ is an interpretation with computable parameters such that $I(\Phi \circ f(G))=G$ for all $G \in \Bfrak$. It follows that $\Bfrak \leq_P \C$, and therefore $\C$ is not tractable. 
\end{proof}

\section{From IP to somewhere density}\label{sec:dense}

Combinatorial configurations that perform pairing on tuples are central to the analysis of monadically NIP classes of structures, and more broadly, monadically NIP first-order theories. Surprisingly, it turns out that monadic NIP is equivalent to the absence of such configurations (see Theorem 4.1 in \cite{mNIP}). In particular, any non-monadically NIP class of structures admits a tuple-coding configuration. Restricting to monotone (resp.\ hereditary) classes guarantees that the presence of these configurations is achieved via  ``low complexity'' formulas, namely quasi-positive (resp.\ existential) formulas.  We make the above precise in the following definition and subsequent theorem. 

\begin{definition}
Let $M$ be an (infinite) $\L$-structure. A \emph{pre-coding configuration} in $M$ consists of a formula $\phi(\bar x, \bar y, z)$ with parameters, a sequence $\Ical = \langle \bar d_i : i \in \N \rangle$, and $\{ c_{s,t} : s, t \in \N \}$ such that the following hold for all $s, t \in \N$:
\begin{enumerate}
    \item $M \models \phi(\bar d_s, \bar d_t, c_{s,t}) $;
    \item $M \models \neg\phi(\bar d_s,\bar d_v, c_{s,t})$, for all $v> t$;
    \item $M \models \neg(\bar d_u,\bar d_t, c_{s,t})$, for all $u<s$.
\end{enumerate}

\end{definition}

Furthermore, when $\phi(\bar x, \bar y, z):=\exists \bar w\psi(\bar x,\bar y, z,\bar w)$ is existential in the definition above, we say that the pre-coding configuration is \emph{disjoint} if there are tuples $\bar h_{s,t}$ from $M$ such that $M\models\psi(\bar d_s,\bar d_t,c_{s,t},\bar h_{s,t})$ and any two of $\{\bar d_s, c_{s,t}, \bar h_{s,t}: s, t \in \N \}$ are disjoint and they do not intersect the parameters of $\phi$. Note that we may always transform a pre-coding configuration by an existential formula into a disjoint one via a canonical Ramsey-type argument, akin to the proof of Lemma \ref{lem:disjoint}. The following theorem guarantees that we may find quasi-positive pre-coding in any monotone class which is not monadically NIP. 

\begin{theorem}[Braunfeld, Laskowski, \cite{exmNIP}]\label{xmNIP}
Let $\C$ be a monotone class of structures that is not monadically NIP. Then $\C$ admits disjoint pre-coding via a quasi-positive formula, i.e.\ there is some $M \models \Th(\C)$ and a disjoint pre-coding configuration in $M$ witnessed by a quasi-positive formula $\phi(\bar x, \bar y, z):=\exists \bar w\psi(\bar x,\bar y, z,\bar w)$. 
\end{theorem}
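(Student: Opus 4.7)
The plan is to combine three ingredients: the characterisation of non-monadically NIP classes in terms of pre-coding by \emph{some} formula (cf.\ Theorem 4.1 of \cite{mNIP}), a monotone reduction which eliminates the negative atomic content of the witness, and canonical Ramsey extractions that enforce disjointness while producing a quasi-positive witness.

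Start with a sufficiently saturated $M \models \Th(\C)$ admitting a pre-coding configuration $\langle \bar d_s : s \in \N\rangle$, $\{c_{s,t}\}_{s,t \in \N}$ witnessed by an \emph{a priori} arbitrary formula $\phi^*(\bar x, \bar y, z)$ with parameters $\bar p$. Passing to an elementary extension and applying Ramsey, assume $(\bar d_s)$ is order-indiscernible over $\bar p$ and, by iterated canonical Ramsey on the array $(c_{s,t})$, that the quantifier-free $\bar p$-type of $(\bar d_s, \bar d_t, c_{s,t})$ depends only on the order of $s$ and $t$. Put $\phi^*$ in prenex DNF and, by pigeonhole on the choice of disjunct, reduce to a single existential disjunct $\chi(\bar x, \bar y, z) = \exists \bar w\,\big(\theta^+ \wedge \theta^-\big)(\bar x, \bar y, z, \bar w)$, with fixed Skolem witnesses $\bar h_{s,t}$ in $M$ for the positive instances.

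The decisive step invokes monotonicity. Let $A := \bigcup_{s,t}\big(\bar d_s \cup \{c_{s,t}\} \cup \bar h_{s,t}\big) \cup \bar p$ and let $M'$ be the weak substructure of $M$ on $A$ which retains \emph{only} the atomic facts appearing in $\theta^+(\bar d_s, \bar d_t, c_{s,t}, \bar h_{s,t})$ for $s,t \in \N$. By monotonicity of $\C$, every finite weak substructure of $M'$ lies in $\C$, and compactness gives $M' \models \Th(\C)$. Define
\[ \phi(\bar x, \bar y, z) \;:=\; \exists \bar w\,\big(\theta^+(\bar x,\bar y,z,\bar w) \wedge \eta_=(\bar x,\bar y,z,\bar w)\big), \]
where $\eta_=$ is the conjunction of apartness $v_i \neq v_j$ dictated by the canonical equality type of the witnesses. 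This $\phi$ is quasi-positive, and the positive instances $\phi(\bar d_s, \bar d_t, c_{s,t})$ are immediate by construction. For the negative instances, the key observation is that in $M'$ the only atomic facts incident to $c_{s,t}$ are those from the $(s,t)$-witness, so any purported Skolem witness $\bar w'$ for $\phi(\bar d_s, \bar d_v, c_{s,t})$ (with $v>t$) or for $\phi(\bar d_u, \bar d_t, c_{s,t})$ (with $u<s$) must coincide with $\bar h_{s,t}$ on every coordinate adjacent to $c_{s,t}$; the apartness conjuncts in $\eta_=$, together with the enforced disjointness of the $(\bar d_i)$'s, then preclude such a $\bar w'$.

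Finally, the disjointness of $\{\bar d_s, c_{s,t}, \bar h_{s,t}\}$ and their disjointness from $\bar p$ is obtained by iterated applications of the bipartite canonical Ramsey theorem (\Cref{canramsey}) exactly as in the proof of \Cref{lem:disjoint}: colour each $(s,t)$ by the equality pattern of the tuple, find a canonical subfamily, and absorb constant coordinates into $\bar p$, coordinates depending only on $s$ (resp.\ only on $t$) into $\bar d_s$ (resp.\ $\bar d_t$). The main obstacle, in my view, is justifying that $\phi$ genuinely fails at the non-coding positions after the monotone pruning: since the parameter $c_{s,t}$ is shared across all three conditions of pre-coding, the argument cannot rely merely on deleting atomic facts, but must lean critically on the apartness conjuncts of the quasi-positive formula together with the canonical Ramsey regularity extracted earlier. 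This is precisely the step where monotonicity (as opposed to mere heredity) is indispensable, and it also explains why the final witness is only quasi-positive rather than primitive positive.
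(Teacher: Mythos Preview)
The paper does not prove this theorem; it is quoted as a black-box result of Braunfeld and Laskowski from \cite{exmNIP} and then applied in the proof of the subsequent theorem. So there is no proof in the present paper to compare against.

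On the merits of your sketch, there is a genuine gap in the step you yourself flag as the main obstacle: the verification of the \emph{negative} pre-coding conditions in the pruned structure $M'$. Your argument hinges on the claim that any putative witness $\bar w'$ for $\phi(\bar d_s,\bar d_v,c_{s,t})$ with $v>t$ must agree with $\bar h_{s,t}$ on coordinates adjacent to $c_{s,t}$, and that the apartness conjuncts $\eta_=$ together with disjointness of the $\bar d_i$ then rule out $\bar w'$. But this fails in general. Consider the toy positive part $\theta^+(x,y,z,w_1,w_2) = R(x,w_1)\wedge S(y,w_2)\wedge T(z,w_1)$, with the original negative part $\theta^-$ (say $\neg P(y,z)$) responsible for killing the unwanted instances in $M$. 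After your pruning, $M'$ retains exactly the atoms $R(d_s,h_{s,t}(1))$, $S(d_t,h_{s,t}(2))$, $T(c_{s,t},h_{s,t}(1))$ for all $s,t$. Now $\theta^+(d_s,d_v,c_{s,t},w_1',w_2')$ is satisfied by the \emph{mixed} witnesses $w_1'=h_{s,t}(1)$, $w_2'=h_{s,v}(2)$, and since after your Ramsey step all the named elements are pairwise distinct, $\eta_=$ is just global pairwise apartness, which the mixed witnesses also satisfy. So your quasi-positive $\phi$ holds at a forbidden triple, and pre-coding is lost.

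The underlying issue is that pruning to $\theta^+$ can disconnect the $\bar y$-part of the formula from the $z$-part (the connectivity may have been carried entirely by $\theta^-$), and once that happens no amount of apartness recovers it. The actual Braunfeld--Laskowski argument does not proceed by naively dropping $\theta^-$ after fixing a single DNF disjunct; one needs a finer analysis (in their paper, via indiscernible arrays and a careful choice of which relations to retain, tied to the specific way the coding fails) to ensure that the surviving positive content still separates the positions. Your sketch correctly identifies monotonicity and canonical Ramsey as the two engines, but the bridge between ``drop negative atoms'' and ``negative instances persist'' is missing and, as the example shows, does not come for free.
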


The next lemma tells us that whenever a quasi-positive formula does disjoint pre-coding in a structure $M$, then it must create paths in $\Gaif(M)$ which avoid parameters.

\begin{restatable}{lemma}{precodingpaths}\label{precodingpaths}
Let $M$ be an $\L$-structure with a disjoint pre-coding configuration given by a quasi-positive formula $\phi(\bar x, \bar y, z)$. Then there are $x_0 \in \bar x$ and $y_0 \in \bar y$ such that there is a path from $x_0$ to $y_0$ in $\Gaif(\Mcal_{\tilde\phi})$ that does not pass through the parameters of $\phi$.     
\end{restatable}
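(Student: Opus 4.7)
I argue by contradiction. Assume that for every $x_0 \in \bar x$ and $y_0 \in \bar y$ there is no path from $x_0$ to $y_0$ in $\Gaif(\Mcal_{\tilde\phi})$ avoiding the parameters $\bar p$ of $\phi$; equivalently, after deleting the vertices of $\bar p$ from $\Gaif(\Mcal_{\tilde\phi})$, the sets $\bar x$ and $\bar y$ lie in disjoint unions of connected components. This disconnection lets me factor the primitive-positive part of $\psi$ into an $\bar x$-piece and a $\bar y$-piece meeting only through parameters. Confronting this factorization with the pre-coding witnesses --- whose disjointness automatically discharges the cross inequalities arising from the quasi-positive formula --- will force $\phi$ to hold on instances where the pre-coding requires it to fail.

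\textbf{Splitting the formula.} Define $A$ to be the union of the connected components of $\Gaif(\Mcal_{\tilde\phi}) \setminus \bar p$ that meet $\bar x$, together with all components meeting neither $\bar x$ nor $\bar y$; define $B$ as the union of the components meeting $\bar y$. Then $\bar x \subseteq A$, $\bar y \subseteq B$, and every non-parameter vertex of $\Mcal_{\tilde\phi}$ (in particular $z$ and each variable in $\bar w$) sits in exactly one of $A, B$. Writing $\psi = \tilde\psi \wedge I$, where $I$ is the conjunction of the inequalities $v_i \ne v_j$ of the quasi-positive $\psi$, every atom of $\tilde\psi$ has its non-parameter variables entirely inside one side, so $\tilde\psi = \psi_A \wedge \psi_B$ and $I = I_A \wedge I_B \wedge I_{AB}$, the last gathering the cross inequalities. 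Splitting $\bar w = \bar w_A \bar w_B$ accordingly, set
\[ \phi_1 := \exists \bar w_A (\psi_A \wedge I_A), \qquad \phi_2 := \exists \bar w_B (\psi_B \wedge I_B), \]
so that $z$ appears in exactly one of $\phi_1, \phi_2$.

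\textbf{Deriving the contradiction.} Suppose first that $z$ lies on the $A$-side. For arbitrary $s, t \in \N$ choose $v > \max(s, t)$. The disjoint pre-coding supplies witnesses $\bar h_{s,t}, \bar h_{s,v}$ with $M \models \psi(\bar d_s, \bar d_t, c_{s,t}, \bar h_{s,t})$ and $M \models \psi(\bar d_s, \bar d_v, c_{s,v}, \bar h_{s,v})$. Splitting each as $\bar h = \bar h^A \bar h^B$, the tuple $(\bar d_s, \bar d_v, c_{s,t}, \bar h^A_{s,t}, \bar h^B_{s,v})$ immediately satisfies $\psi_A \wedge \psi_B \wedge I_A \wedge I_B$. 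For any cross inequality in $I_{AB}$, one side takes values in $\bar d_s \cup c_{s,t} \cup \bar h^A_{s,t} \cup \bar p$ and the other in $\bar d_v \cup \bar h^B_{s,v} \cup \bar p$; the two non-parameter ranges are disjoint by the disjoint-pre-coding hypothesis (using $v \notin \{s,t\}$), and any inequality between parameter values was already true in the original pre-coding instance. Hence $M \models \phi(\bar d_s, \bar d_v, c_{s,t})$, contradicting the second pre-coding clause since $v > t$. The case where $z$ lies on the $B$-side is entirely symmetric, using the third pre-coding clause at some $u < s$ with $u \ne t$. The delicate point is the verification of $I_{AB}$, which reduces to the bookkeeping observation that $A$- and $B$-side witnesses drawn from distinct pre-coding instances are disjoint --- precisely the content of the ``disjoint'' strengthening of pre-coding.
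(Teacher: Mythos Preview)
Your proof is correct and follows the same strategy as the paper's: separate the non-parameter variables of $\Mcal_{\tilde\phi}$ into two pieces, mix witnesses from two pre-coding instances to force $\phi$ to hold where the pre-coding axioms forbid it, and invoke the disjointness hypothesis to discharge the cross inequalities. The only difference is organizational---you perform a single $A/B$ split with a case analysis on which side contains $z$, whereas the paper runs the argument twice (once grouping $z$ with $\bar y$, once with $\bar x$) and then connects $x_0$ to $y_0$ through $z$---but the underlying idea is identical.
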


\begin{proof}
Let $\phi(\bar x,\bar y,z)=\exists \bar w \psi(\bar x,\bar y,z,\bar w)$, $M$ and $(\bar d_s)_{s \in \N}, (c_{s,t})_{s,t \in \N}, (\bar h_{s,t})_{s,t \in \N}$ witness a disjoint quasi-positive pre-coding configuration in $M$. Consider the formula $\tilde\psi$, i.e.\ the quantifier-free formula obtained by removing all conjuncts of the form $v_i\neq v_j$ from $\psi$, and let $\tilde \phi(\bar x,\bar y,z)$. Let $G = \Gaif(\Mcal_{\tilde\phi})\setminus \{ p : p$ is a parameter in $\phi \}$, $M_x = N^G(\bar x)$, and $M_{y,z}=N^G(\bar y, z)$, i.e.\ $M_x$ (resp. $M_{y,z}$) is the set of those variables that are reachable from $\bar x$ (resp.\ $\bar y\cup\{z\}$) in $\Gaif(\Mcal_{\tilde\phi})$ by avoiding parameters. Assume for a contradiction that $M_x$ and $M_{y,z}$ are disjoint, and without loss of generality write $\bar w = \bar w_x^\frown \bar w_{y,z}$ with $\bar w_x \in M_x$ and $\bar w_{y,z} \in M_{y,z}$, so that $\phi(\bar x,\bar y,z)=\exists \bar w_x \exists \bar w_{y,z} \psi(\bar x,\bar y,z,\bar w_x,\bar w_{y,z})$. Similarly, partition the existential witnesses $\bar h_{i,j}$ from $M$ into $\bar \alpha_{i,j}$ and $\bar \beta_{i,j}$, so that $\bar h_{i,j}=\bar \alpha_{i,j}^\frown \bar \beta_{i,j}$, and $M \models \psi(\bar d_i,\bar d_j, c_{i,j},\bar \alpha_{i,j},\bar \beta_{i,j})$ for all $i,j \in \N$. 

By assumption, $M \models \psi(\bar d_1,\bar d_2, c_{1,2},\bar \alpha_{1,2},\bar \beta_{1,2})\land \phi(\bar d_2,\bar d_3, c_{2,3},\bar \alpha_{2,3},\bar \beta_{2,3})$, and so in particular $M \models \tilde\phi(\bar d_1,\bar d_2, c_{1,2},\bar \alpha_{1,2},\bar \beta_{1,2})\land \tilde\phi(\bar d_2,\bar d_3, c_{2,3},\bar \alpha_{2,3},\bar \beta_{2,3})$. Since $M_x$ and $M_{y,z}$ are disjoint, Lemma \ref{ppcomponents} implies that $M \models \tilde\psi(\bar d_1, \bar d_3, c_{2,3},\bar \alpha_{1,2},\bar \beta_{2,3})$. However, the fact that $\phi$ is a pre-coding configuration implies that $M \models \neg \phi(\bar d_1, \bar d_3, c_{2,3})$. Consequently, it must be that there are variables $u \in M_x$ and $v \in M_{y,z}$ such that $\psi$ contains $u \neq v$ as a conjunct. Since $\M \models \neg \psi(\bar d_1, \bar d_3, c_{2,3},\bar \alpha_{1,2},\bar \beta_{2,3})$, this means that the tuples $\bar d_1^\frown \bar \alpha_{1,2}$ and $\bar d_3, c_{2,3},\beta_{2,3}$ must have an element in common, contradicting disjointedness. 

It follows that there is some $x_0 \in \bar x$ such that there is a path avoiding parameters from $x_0$ to some element in $M_{\bar y,z}$; this is either some $y_1 \in \bar y$, and so we are done, or $z$. By a similar argument, we may find some $y_0 \in \bar y$ such that there is a path avoiding parameters to some element in $M_{\bar x, z}$; either this is some $x_1 \in \bar x$, and so we are done, or this $z$, in which case $x_0$ and $y_0$ are connected via $z$ by a path that avoids parameters. 
\end{proof}

The above lemma, together with the fact that the elements in a pre-coding configuration can be taken to suitably disjoint, is what allows us to find subdivided cliques in $\Gaif(\C)$. 

\begin{theorem}
Let $\C$ be a monotone class of $\L$-structures that is not monadically NIP. Then $\Gaif(\C)$ is somewhere dense.
\end{theorem}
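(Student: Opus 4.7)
The plan is to transfer the disjoint quasi-positive pre-coding guaranteed by Theorem \ref{xmNIP} into finite members of $\C$ via a first-order transfer, and then to extract subdivided cliques of all sizes from the resulting configurations using the canonical path supplied by Lemma \ref{precodingpaths}.

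First, Theorem \ref{xmNIP} furnishes $M \models \Th(\C)$ admitting a disjoint pre-coding via a quasi-positive formula $\phi(\bar x, \bar y, z) = \exists \bar w\, \psi(\bar x, \bar y, z, \bar w)$, witnessed by pairwise disjoint $\{\bar d_s, c_{s,t}, \bar h_{s,t} : s, t \in \N\}$. Lemma \ref{precodingpaths} then produces $x_0 \in \bar x$, $y_0 \in \bar y$, and a graph path from $x_0$ to $y_0$ in $\Gaif(\Mcal_{\tilde\phi})$ that avoids the parameters of $\phi$. Taking the last $\bar x$-vertex and the first subsequent $\bar y$-vertex on this path, I pass to a subpath $P'$ from some $x_0' \in \bar x$ to some $y_0' \in \bar y$ whose $\ell$ internal vertices all lie in $\{z\} \cup \bar w$ (and avoid parameters).

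For each $n \in \N$, I form the first-order sentence $\sigma_n$ asserting the existence of $n + \binom{n}{2}$ pairwise disjoint tuples $\bar d_1, \ldots, \bar d_{n + \binom{n}{2}}$, together with elements $c_{s,t}$ and tuples $\bar h_{s,t}$ for $s \in [n]$ and $t \in [n+1, n+\binom{n}{2}]$, satisfying: (i) $\psi(\bar d_s, \bar d_t, c_{s,t}, \bar h_{s,t})$; (ii) disjointness of the entire witness family from itself and from the parameters of $\phi$; and (iii) distinctness of the interpretations of distinct variables appearing on $P'$ within each witness. Conditions (i) and (ii) hold in $M$ by the disjoint pre-coding; (iii) is secured via a canonical Ramsey refinement on the equality types of the witnesses (in the spirit of Lemma \ref{lem:disjoint}), at the cost of possibly replacing $P'$ by an induced quotient path of smaller length. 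Hence $M \models \sigma_n$, and since $M \models \Th(\C)$ the standard transfer (if $\neg\sigma_n \in \Th(\C)$ then $M \models \neg\sigma_n$) yields some $N_n \in \C$ with $N_n \models \sigma_n$.

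In $\Gaif(N_n)$, the witnesses to $\sigma_n$ produce, for each pair $(s, t)$, a graph path of length $\ell + 1$ from $a_s$ (the image of $x_0'$ in $\bar d_s$) to $b_t$ (the image of $y_0'$ in $\bar d_t$), with internal vertices drawn from $\{c_{s,t}\} \cup \bar h_{s,t}$. Disjointness (ii) forces these paths to be internally vertex-disjoint across distinct pairs and to avoid the endpoints of other pairs, so $K_{n,\binom{n}{2}}^{\ell}$ is a subgraph of $\Gaif(N_n)$. Assigning a unique midpoint $b_{j(i,i')}$ to each unordered pair $\{i, i'\} \subseteq [n]$ and concatenating, the path $a_i \to b_{j(i,i')} \to a_{i'}$ of length $2(\ell + 1)$ subdivides the edge $\{i, i'\}$ with $2\ell + 1$ internal vertices, and distinctness of the midpoints ensures internal vertex-disjointness across all pairs. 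Therefore $K_n^{2\ell + 1}$ is a subgraph of $\Gaif(N_n)$ for every $n$, so $\Gaif(\C)$ is somewhere dense with parameter $r = 2\ell + 1$. The principal obstacle is condition (iii): without the Ramsey refinement, distinct variables on $P'$ may collapse in some witness, destroying the uniform subdivision length needed to deduce somewhere density with a single $r$.
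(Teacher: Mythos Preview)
Your proof is correct and follows essentially the same route as the paper: disjoint quasi-positive pre-coding via Theorem \ref{xmNIP}, the parameter-avoiding path in $\Gaif(\Mcal_{\tilde\phi})$ from Lemma \ref{precodingpaths}, and a transfer from $M\models\Th(\C)$ back into members of $\C$. The paper is terser---it asserts $K^r_{\omega,\omega}\leq\Gaif(M)$ directly and transfers the Gaifman-subgraph condition as a single existential sentence---whereas you explicitly encode the witness configuration in $\sigma_n$ and spell out both the Ramsey refinement needed for (iii) and the bipartite-to-clique conversion; these are precisely the details the paper leaves implicit.
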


\begin{proof}
If $\C$ is a monotone class of $\L$-structures that is not monadically NIP, then $\C$ admits disjoint coding by a quasi-positive formula $\phi(\bar x, \bar y, z)=\exists\bar w\psi(\bar x,\bar y, z,\bar w)$. Let $M \models \Th(\C)$ and $\{\bar d_s, c_{s,t}, \bar h_{s,t}: s, t \in \N \}$ witness this as in Theorem \ref{xmNIP}. By Lemma \ref{precodingpaths}, this implies that there are $a_s\in \bar d_s, b_t \in \bar d_t$ for $s, t \in \N$ with the property that $a_s$ is connected to $b_t$ in $\Gaif(M)$ by a path of some fixed length $r$. Moreover, the paths are disjoint for different $(s,t)$ as they do not pass through the parameters of $\phi$ and non-parameter tuples are disjoint. It follows that $K^r_{\omega,\omega} \leq \Gaif(M)$. Hence, we obtain for each $n \in \N$ some $M_n \in \C$ such that $K^r_{n,n}\leq \Gaif(M_n)$, and so it follows that $\Gaif(\C)$ is somewhere dense. 
\end{proof}

Since being nowhere dense is preserved by taking the monotone closure, this has the following stronger implication. 

\begin{corollary}
Let $\C$ be any class of $\L$-structures such that $\Gaif(\C)$ is nowhere dense. Then $\C$ is monadically stable.
\end{corollary}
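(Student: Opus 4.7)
The plan is to deduce this from the preceding theorem (for monotone classes) by passing to the monotone closure. Let $\C^{\ast}$ denote the monotone closure of $\C$, i.e.\ the class of all weak substructures of structures in $\C$. The key observation is that $\Gaif(\C^{\ast})$ is still nowhere dense: if $M' \leq M$ is a weak substructure, then $\Gaif(M')$ is a subgraph of $\Gaif(M)$, and subgraphs of graphs in a nowhere dense class remain nowhere dense (if $K_n^r$ embeds in the subgraph, it embeds in the ambient graph).

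Next, I would apply the contrapositive of the theorem just proven (\emph{If $\C$ is monotone and not monadically NIP, then $\Gaif(\C)$ is somewhere dense}) to the monotone class $\C^{\ast}$ to conclude that $\C^{\ast}$ is monadically NIP. Now I would invoke \Cref{th:exmNIP}: since $\C^{\ast}$ is monotone (hence hereditary), monadic NIP collapses to NIP; since $\C^{\ast}$ is monotone, NIP is equivalent to stability; and since $\C^{\ast}$ is hereditary, stability is equivalent to monadic stability. In total, $\C^{\ast}$ is monadically stable.

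Finally, I would transfer monadic stability from $\C^{\ast}$ down to $\C$. This is routine: given any monadic expansion $\C'$ of $\C$ by unary predicates $\{P_i : i \in I\}$, I can extend it to a monadic expansion of $\C^{\ast}$ by interpreting the $P_i$ arbitrarily (say, as empty sets) on the additional structures in $\C^{\ast} \setminus \C$. If a formula $\phi(\bar x, \bar y)$ had the order property in $\C'$, it would also have it in this extended monadic expansion of $\C^{\ast}$, contradicting monadic stability of $\C^{\ast}$. Hence $\C$ is monadically stable, as required.

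There is no real obstacle here, beyond the two routine verifications sketched above: that nowhere density of $\Gaif(\C)$ passes to the monotone closure, and that monadic stability passes to subclasses. Both are immediate from unwinding the definitions, and the real content of the corollary lies entirely in the preceding theorem together with \Cref{th:exmNIP}.
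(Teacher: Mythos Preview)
Your proposal is correct and follows essentially the same route as the paper: pass to the monotone closure, observe that its Gaifman class remains nowhere dense, apply the preceding theorem to obtain monadic NIP, collapse to monadic stability via \Cref{th:exmNIP}, and restrict to the subclass. The only differences are cosmetic---you spell out the final transfer step and the nowhere-density-of-the-closure argument in slightly more detail than the paper does.
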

\begin{proof}
Consider $\C$ as above, and let $\tilde{\C}$ be its monotone closure. It follows that $\Gaif(\tilde{\C})$ is nowhere dense, as it is contained in the monotone closure of $\Gaif(\C)$ which is nowhere dense by assumption. By the above, it follows that $\tilde{\C}$ is monadically NIP, and so by monotonicity it is monadically stable. As $\C\subseteq \tilde{\C}$, this implies that $\C$ is monadically stable.
\end{proof}

None of the results here requires that $\C$ contains finite structures, nor that $\L$ is finite. Hence, by simply taking $\C=\{M\}$ for any infinite $\L$-structure in a possibly (countably) infinite relational language, the above is a generalisation of the original result of Podewski and Ziegler \cite{podewskiziegler}.

\begin{corollary}
Let $M$ be a relational structure such that for all $r \in \N$ there is some $n \in \N$ with $K^r_n \not\leq \Gaif(M)$. Then $M$ is monadically stable.
\end{corollary}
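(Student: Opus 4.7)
The plan is to deduce this as an immediate consequence of the preceding corollary by specialising the class $\C$ to the singleton $\C = \{M\}$. Concretely, the hypothesis that for every $r\in\N$ there is some $n\in\N$ with $K^r_n \not\leq \Gaif(M)$ is exactly the statement that the class $\Gaif(\{M\}) = \{\Gaif(M)\}$ is nowhere dense in the sense of the definition in Section~\ref{subsec:sparsity-stability}. So the first step is just to verify this rewriting of the hypothesis.

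Next, I would invoke the preceding corollary (the one established via the monotone closure) applied to $\C := \{M\}$: it yields that $\{M\}$ is monadically stable, which is the definitional content of $M$ being monadically stable (every monadic expansion of $M$ has a stable first-order theory). The only potentially delicate point is that $M$ is allowed to be a structure in a countably infinite relational language, whereas earlier statements in the paper were phrased for finite $\L$. This is addressed by the remark preceding the corollary: the arguments of Section~\ref{sec:dense} (in particular the application of Theorem~\ref{xmNIP} together with Lemma~\ref{precodingpaths}) do not use finiteness of $\L$ anywhere essential, since the quasi-positive pre-coding configuration produced uses only finitely many symbols of $\L$ at a time, and neither does the construction of the monotone closure require finite signature.

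Thus the proof reduces to two short invocations: rewrite the hypothesis as nowhere density of $\Gaif(\{M\})$, then apply the previous corollary. The only step that requires any actual checking is the observation that the results of Section~\ref{sec:dense} go through unchanged for a single (possibly infinite) structure in a countable relational language, which I would handle by pointing the reader to the relevant remark. This then recovers, as a special case, the classical theorem of Podewski and Ziegler~\cite{podewskiziegler}, which is the content asserted in the statement.
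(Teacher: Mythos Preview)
Your proposal is correct and matches the paper's approach exactly: the paper also simply observes (in the remark preceding the corollary) that none of the results of Section~\ref{sec:dense} require $\C$ to consist of finite structures or $\L$ to be finite, and then specialises to $\C=\{M\}$ to deduce the corollary from the previous one.
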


\section{Conclusion}

Our paper settles the question of Adler and Adler, showing that tameness for a monotone class of relational structures can be completely recovered from the structural sparsity of its Gaifman class. We believe that many results from the theory of sparse graphs will generalise to relational structures by working with the Gaifman class, and we plan to exhibit such generalisations in future work. 

Although this has not been addressed thus far, monotonicity as defined for classes of relational structures does not fully correspond to monotonicity in the standard graph-theoretic sense. Indeed, in the graph-theoretic sense, a monotone class of graphs is one closed under removal of \emph{undirected} edges, that is, simultaneous removal of pairs of relations $E(u,v), E(v,u)$. However, a monotone class of $\{E\}$-structures is one where we can remove any $E$ relation (so possibly we can turn an undirected edge into a directed one). In future work, we aim to address this subtle difference by introducing \emph{symmetrically monotone} classes, so that our results can extend to broader classes of relational structures, such as classes of undirected hypergraphs closed under removal of hyperedges. 

Finally, our paper makes a significant contribution towards \Cref{conj}, settling it for the case of monotone classes of structures. While the machinery used in this paper will certainly assist in tackling the full conjecture, we believe that new techniques are required for this task. Here, it is important to understand the role of linear orders in the collapse of monadic NIP and bounded twin-width for hereditary classes of ordered graphs, and to identify which model-theoretic conditions generalise this phenomenon to arbitrary hereditary graph classes. 

\bibliographystyle{plain}
\bibliography{bibliography}

\end{document}